\renewcommand{\geq}{\geqslant}
\renewcommand{\leq}{\leqslant}
\DeclareMathAlphabet{\mathcal}{OMS}{cmsy}{m}{n}
\newcommand{\I}{\mathcal{I}}
\newcommand{\COLS}{\operatorname{\mathcal{C}}}
\newcommand{\ROWS}{\operatorname{\mathcal{R}}}
\newcommand{\opt}{\operatorname{opt}}
\renewcommand{\mathsf}[1]{{#1}}
\newcommand{\bfx}{\mathsf{x}}
\newcommand{\bfX}{\mathsf{X}}
\newcommand{\bfz}{\mathsf{z}}
\newcommand{\bfZ}{\mathsf{Z}}
\newcommand{\bisto}{\mathsf{B}}
\newcommand{\R}{\mathbb{R}}
\newcommand{\diag}{\operatorname{diag}}
\newcommand{\unit}{\mathbbm{1}}
\def\@maketitle{%
  \newpage\setcounter{footnote}{1}
  \null
  \vspace*{12pt}%
  \parbox{132mm}{
  \begin{center}%
  \let \footnote \thanks
    {\fontsize{12}{15}\bf\selectfont \@title \par}%
    \vskip 1.2em%
    {
       {\fontsize{10}{12}\it\selectfont \@author\\[15.5pt]}
     }
          \vskip 1em%
  \end{center}
  }
  \vspace*{-.75pc}}
\def\ps@plain{%
  \def\@oddfoot{}
  \def\@evnhead{\CROPMARKSA}%
  \def\@oddhead{\CROPMARKSA}%
  \let\@mkboth\markboth
}
\begin{document}

\title{Solution of the optimal assignment problem by diagonal scaling algorithms\footnote{
This work has been supported in
part by the French National Research Agency (ANR) through the COSINUS
program (projects PETAL no ANR-08-COSI-009 and PETALH no
ANR-10-COSI-013) and by the Gaspard Monge Optimization Programme of 
Fondation Math\'ematique Jacques Hadamard and EDF.}
}

\shorttitle{Optimal assignment and diagonal scaling}

\author{%
{\sc
Meisam Sharify\thanks{Corresponding author. Email: Meisam.Sharify@manchester.ac.uk}}\\[2pt]
School of Mathematics, Alan Turing Building, The University of Manchester, Manchester, M139PL, UK\\[6pt]
{\sc
St\'ephane Gaubert\thanks{Email: Stephane.Gaubert@inria.fr}}\\[2pt]
INRIA  \& Centre de Math\'ematiques Appliqu\'ees, UMR 7641, Ecole Polytechnique, 91128 Palaiseau, France\\[6pt]
{\sc and}\\[6pt]
{\sc
Laura Grigori\thanks{Email: Laura.Grigori@inria.fr}} \\[2pt]
INRIA \& Laboratoire J.L. Lions, UMR 7598, Universite Pierre et Marie Curie, Paris, France
}

\shortauthorlist{M. Sharify, S. Gaubert and L. Grigori}

\maketitle

\begin{abstract}
{We show that a solution of the optimal assignment problem can be obtained
as the limit of the solution of an entropy maximization problem,
as a deformation parameter tends to infinity. This allows us
to apply entropy maximization algorithms to the optimal
assignment problem. In particular, the Sinkhorn algorithm leads
to a parallelizable method, which can be used as
a preprocessing to handle large dense optimal assignment
problems. This parallel preprocessing allows
one to delete entries which do not belong to optimal
permutations, leading to a reduced instance which becomes
solvable with limited memory requirements.}
{large scale optimal assignment problem; entropy maximization; 
matrix diagonal scaling; parallel computing; Sinkhorn iteration; Newton method.
} 
\end{abstract}

\section{Introduction}
\label{intro}

One of the most classical problems in combinatorial optimization is 
optimal assignment.
Several applications of this problem arise in different fields of applied
sciences
such as bioinformatics for protein structure alignment
problem~\cite{Holm93,Lin:2004}, VLSI design~\cite{Huang90}, image
processing and computer vision~\cite{Cheng96}, and the pivoting
problem in the solution of large linear systems of
equations~\cite{Olschowka96,duffkoster,li03superdist}.  Thus,
this problem has received considerable attention and several
algorithms have been proposed to solve it.

The first polynomial time algorithm to solve this problem was proposed
by~\cite{Kuhn55}. It works in $O(n^4)$ time, which
was improved to $O(n^3)$ by~\cite{Edmonds69} 
(see also~\cite{Dinic69}) where $n$ denotes the dimension of the input matrix.
In the sparse case,~\cite{Tarjan87} proposed an improved
algorithm which uses Fibonacci heaps for the shortest paths
computations. It runs in $O(n(m + n \log n))$ time where $m$ denotes the number of arcs. 
Several other algorithms have also been developed. We refer the
interested reader to the recent book of~\cite{Assignmentprobs}.

In this paper we exploit the connection between the optimal assignment problem 
and entropy maximization. The latter is well studied
in the field of convex optimization~\cite{entropybook}.
The main idea is to think of the optimal assignment problem 
as the limit of a deformation of an 
entropy maximization problem.
More precisely, given an $n\times n$ non-negative matrix $A=(a_{ij})$, let us look for 
an $n\times n$ bistochastic matrix $\bfX=(\bfx_{ij})$ maximizing the
relative entropy
\begin{equation}
\label{int_entropy}
J_p(\bfX):=
-\sum_{1\leq i,j\leq n} \bfx_{ij}(\log(\bfx_{ij}/a_{ij}^p) -1)  \enspace ,
\end{equation}

Here, $p$ is the deformation parameter. We will show in Section
\ref{sec:entmax} that when $p$ goes to infinity, the unique solution 
$X(p)=(x_{ij}(p))$ of the entropy maximization problem converges to a point $X(\infty)$ which
is of maximal entropy among the ones in the convex hull of the
matrices representing optimal permutations. In particular, if there is only
one optimal permutation, $X(p)$ converges to the matrix representing this
optimal permutation.
In Section~\ref{sec:maxent} we prove that, as $p\to \infty$,
\[
|\bfx_{ij}(p)- \bfx_{ij}(\infty)|= O(\exp(-cp)), \qquad \forall 1\leq i,j\leq n 
\] 
for some constant $c>0$.  This shows an exponential convergence to the optimal
solution when $p$ increases.

The maximal entropy matrix $\bfX(p)$
can be computed by any matrix scaling algorithm
such as Sinkhorn iteration~\cite{sinkhorn67} or Newton method~\cite{knight2007}. 
Subsequently, these iterative methods can be used to develop new algorithms 
to solve the optimal assignment problem
and related combinatorial optimization problems.

In Section~\ref{sec:conv}, we introduce an iterative method which is based on a
modification of Sinkhorn scaling algorithm, in which
the deformation parameter is slowly increased (this procedure
is reminiscent from simulated annealing, the parameter $p$
playing the role of the inverse of the temperature).
We prove that this
iteration, which we refer to as \textit{deformed-Sinkhorn iteration},
converges to a matrix whose entries that belong to the optimal
permutations are nonzero, while all the other entries are zero. An
estimation of the rate of convergence is also presented, 
but this appears to be mostly of theoretical interest
since in practice, the convergence of this algorithm
appears to be slow.

In Section~\ref{sec:fixp}, we investigate a preprocessing algorithm which can be used in 
the solution of large scale dense optimal assignment problem. This problem appears 
in several applications such as vehicle routing problem, object 
recognition and computer vision~\cite{auction2009}.
An application to cosmology (reconstruction
of the early universe) can be found in the work of~\cite{reconstruction2008}.
Models of large dense random assignment problems are also 
considered in~\cite[Ch.~VII]{spinglass} from the point of view of statistical physics.

Our preprocessing algorithm, is based on an iterative method that eliminates the 
entries not belonging to an optimal assignment.
This reduces the initial problem to a much smaller problem in terms of memory requirements.
This is illustrated in Figures~\ref{pic:dense} and~\ref{pic:sparse}.

\begin{figure}[t!]
\begin{minipage}{0.47\linewidth}
\centering
\includegraphics[scale=0.5]{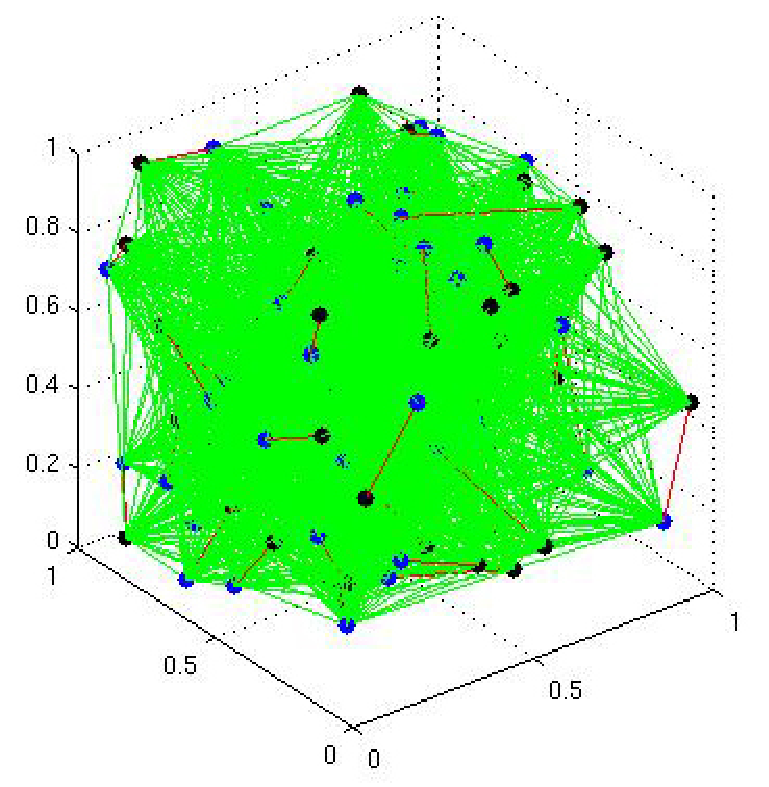}
\caption{Euclidean random assignment problem, see Section~\ref{subsec:sinkhexperiments} for more details.} 
\label{pic:dense}
\end{minipage} 
\begin{minipage}{0.47\linewidth}
\centering
\includegraphics[scale=0.5]{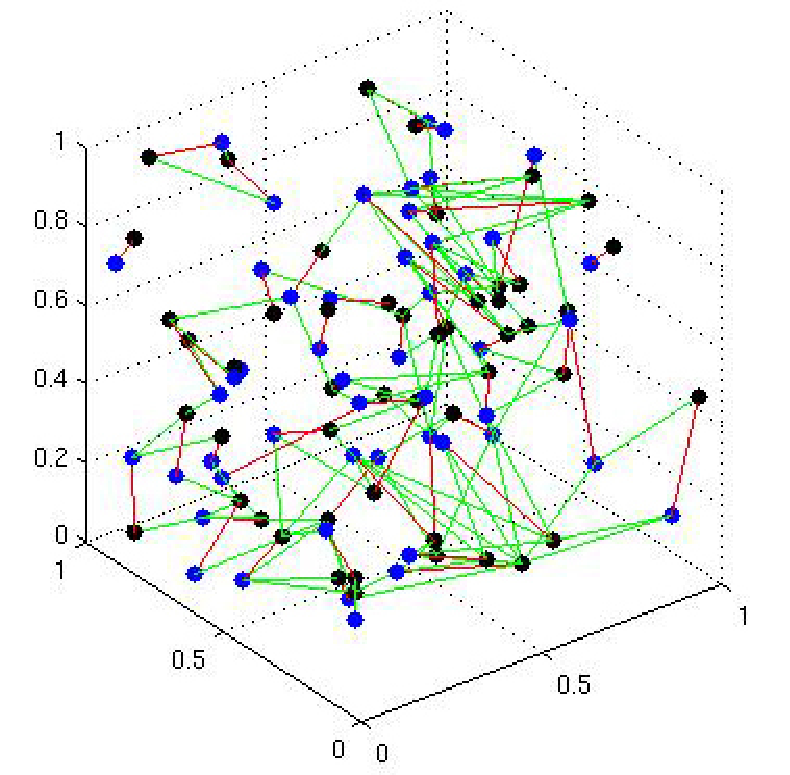} 
\caption{Reduced problem after applying the preprocessing algorithm.}
\label{pic:sparse}
\end{minipage} 
\end{figure}

The idea of this
algorithm is to take $p$ large enough, then apply a diagonal scaling algorithm 
to $A^{(p)}$ until convergence to a bistochastic matrix $\bfX$, and
finally delete the small entries of $\bfX$. 
Computing naively the exponential of $A^{(p)}$ 
would lead to numerical overflow for large values of $p$. 
However, in Section~\ref{sec:numins},
we shall see that it is possible to implement this iteration in a numerically stable way (with ``log-glasses'').
We also provide an approximate optimality certificate 
which can be used to check a posteriori that the result of the algorithm is optimal up to a given factor.
Our algorithm, presented in Section~\ref{sec:preprocalg}, assumes the existence of at least
one matching, since otherwise, Sinkhorn iteration may not converge.

In Section~\ref{sec:experminets}, we present the result of running the
preprocessing algorithm on several dense matrices from gallery of Matlab.
We consider two variants of the algorithm, one by using Sinkhorn iteration
as the diagonal scaling algorithm and the other one by using Newton iteration.
The advantage of Newton method is the speed of the convergence to bistochastic matrix. 
On the other hand, the advantage of Sinkhorn iteration is that, it can be efficiently
implemented in parallel~\cite{patrickduff08,duru:08}. 
So, the latter variant of the preprocessing algorithm can be used in the solution of very large dense optimal 
assignment problems, where the data cannot be stored in one machine.
In this way, the algorithm can run in parallel and reduces the size of the original problem 
and then the reduced problem can be solved by any classical method. 
For both variants, we show that the preprocessing algorithm can be efficiently used to decrease the size of the  
dense matrices, up to $99\%$.

\section{Entropy maximization and matrix scaling}
\label{sec:entmax}
In this section, after recalling some known facts concerning the connection between entropy maximization and matrix
scaling problems, we show (Theorem~\ref{thm:prop-converge1}) 
that the unique solution of a deformed entropy maximization problem i.e., 
an entropy maximization problem depending of a deformation parameter,
converges to the solution of an optimal assignment problem. We also determine
the convergence speed (Corollary~\ref{exponential_conv}).

\subsection{Background results}
The diagonal scaling problem can be generally defined as finding diagonal matrices $D_r$ and $D_c$ with positive
diagonal entries such that the scaled matrix $D_rAD_c$ has
prescribed row and column sums.
Due to the variety of its applications, this problem has been well 
studied~\cite{menonSchneider69,Brualdi,sinkhorn67}.
A comparison of the proposed algorithms to solve this problem, can be found in~\cite{MSchneider1990}.
A remarkable special case arises when the row and column sums of the matrix
$\bfX=D_rAD_c$ are required to be identically one, so that 
$\bfX$ is bistochastic.

A non-negative matrix, $A$, {\em has support} if it has a positive diagonal that is, 
there exists a permutation $\sigma$ such that $a_{i\sigma(i)}> 0, 1\leq i \leq n$.
Also, it {\em has total support} if every positive entry belongs to a diagonal.
The standard way to check whether a matrix has support is to compute 
its Dulmage-Mendelsohn decomposition~\cite{dum58,Pothen90}. However, this approach does not
lead to a parallel algorithm. 
We note that an alternative algorithm based on diagonal scaling has been proposed 
by~\cite{linial}.
This algorithm allows one to determine whether a $n\times n$ 
matrix has support after $n^2\log(n)$ Sinkhorn iterations.
It can be implemented in parallel. 

A non-negative square matrix $A$ is fully indecomposable if 
there does not exist permutation matrices $P$ and $Q$ such that
$PAQ$ is of the form 
\[
\begin{pmatrix} A_{1} & A_{2}\\ O & A_{3}\end{pmatrix}\enspace,
\]
where $A_{1}$ and $A_{3}$ are square matrices.
The following theorem provides a sufficient condition for the existence of a diagonal scaling.  
\begin{theorem}[\cite{sinkhorn67}]
Let $A$ be an $n\times n$ non-negative matrix with total support. Then there exist
diagonal matrices $D_r$ and $D_c$ such that $D_r A D_c$ is bistochastic.
Moreover, if $A$ is fully indecomposable, then $D_r$ and $D_c$ are unique up to a constant factor.
\end{theorem}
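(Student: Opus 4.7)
The plan is to prove existence by a variational argument based on the very entropy functional that motivates the paper, and then establish uniqueness by a connectivity argument that exploits full indecomposability.

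\textbf{Existence via entropy maximization.} I would consider the optimization problem of maximizing
\[
J(\bfX) = -\sum_{i,j} \bfx_{ij}\bigl(\log(\bfx_{ij}/a_{ij}) - 1\bigr)
\]
over the set $\mathcal{F}$ of bistochastic matrices $\bfX$ with $\bfx_{ij}=0$ whenever $a_{ij}=0$, with the usual convention $0\log 0 = 0$. The first step is to observe that total support guarantees that $\mathcal{F}$ is nonempty: by the Birkhoff--von Neumann theorem, every permutation matrix lying on a positive diagonal of $A$ is a point of $\mathcal{F}$, and any convex combination of such permutations is still in $\mathcal{F}$. Since $\mathcal{F}$ is compact and $J$ is continuous (extending by $0$) and strictly concave on the positive orthant, a maximizer $\bfX^\star$ exists.

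\textbf{Interior maximizer and Lagrange conditions.} The crucial point is that $\bfX^\star$ must satisfy $\bfx_{ij}^\star > 0$ whenever $a_{ij} > 0$. Indeed, if some positive entry $a_{ij}$ has $\bfx_{ij}^\star = 0$, then by total support there exists a bistochastic $\bfY \in \mathcal{F}$ with $\bfy_{ij} > 0$. Moving from $\bfX^\star$ toward $\bfY$ along the segment $(1-t)\bfX^\star + t\bfY$ increases $J$ at first order, because the derivative $-\log(\bfx_{ij}/a_{ij})$ tends to $+\infty$ as $\bfx_{ij}\to 0^+$, contradicting optimality. Thus $\bfX^\star$ lies in the relative interior of $\mathcal{F}$ and satisfies the Lagrange condition
\[
\log(\bfx_{ij}^\star / a_{ij}) = \lambda_i + \mu_j \qquad \text{for all } (i,j) \text{ with } a_{ij}>0,
\]
which exponentiates to $\bfx_{ij}^\star = e^{\lambda_i} a_{ij} e^{\mu_j}$. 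Setting $D_r = \operatorname{diag}(e^{\lambda_i})$ and $D_c = \operatorname{diag}(e^{\mu_j})$ proves existence.

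\textbf{Uniqueness under full indecomposability.} Suppose $D_r A D_c$ and $D_r' A D_c'$ are both bistochastic. Taking entrywise ratios yields $(d_r)_i (d_c)_j = (d_r')_i (d_c')_j$ whenever $a_{ij} > 0$, i.e. $(d_r)_i / (d_r')_i = (d_c')_j / (d_c)_j$ on the support of $A$. I would interpret the support of $A$ as the edge set of a bipartite graph on rows and columns; full indecomposability is equivalent to connectedness of this bipartite graph. Propagating the equality along a spanning tree shows that all row ratios are equal to a common constant $\alpha$ and all column ratios to $1/\alpha$, yielding $D_r = \alpha D_r'$ and $D_c = \alpha^{-1} D_c'$.

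\textbf{Main obstacle.} The delicate step is showing the maximizer lies in the relative interior, because the objective is only continuous, not differentiable, at the boundary; the argument requires the one-sided derivative blow-up combined with the total support hypothesis, and without total support the maximizer can end up on the boundary and the scaling need not exist. A secondary subtlety is that the row/column sum constraints are not independent (they both force the total mass to be $n$), so one must either drop a redundant constraint or check that the Lagrange multipliers are still well-defined up to the obvious one-parameter translation $\lambda_i \mapsto \lambda_i + c$, $\mu_j \mapsto \mu_j - c$, which precisely reflects the one-parameter ambiguity stated in the theorem.
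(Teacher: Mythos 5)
The paper does not actually prove this theorem; it cites Sinkhorn and Knopp and later reuses the statement as a black box (via Tarski's transfer principle). Your existence argument is nonetheless essentially the variational proof that the paper itself carries out for its Proposition on entropy maximization (the corollary of Borwein--Lewis--Nussbaum, with $p=1$): compactness and strict concavity give a maximizer, total support plus the infinite one-sided derivative of $-t\log t$ at $0^+$ force the maximizer to have the full pattern of $A$, and the Lagrange conditions on the pattern exponentiate to a diagonal scaling. That part is correct, including your identification of the delicate step (the boundary blow-up of the directional derivative toward a $\bfY$ in the feasible set with $\bfy_{ij}>0$).

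The uniqueness step, however, has a genuine gap. From the hypotheses that $D_rAD_c$ and $D_r'AD_c'$ are \emph{both bistochastic} you cannot "take entrywise ratios" to conclude $(d_r)_i(d_c)_j=(d_r')_i(d_c')_j$ on the support: that identity says the two scaled matrices are entrywise \emph{equal}, which is precisely the uniqueness of the bistochastic matrix diagonally equivalent to $A$ and is itself a substantive part of Sinkhorn's theorem. The fix is available from your own setup but must be stated: any bistochastic $D_rAD_c$ satisfies the stationarity conditions of the concave program (the multipliers are $\log (d_r)_i$ and $\log (d_c)_j$), and for a concave objective over a convex set these conditions are sufficient for optimality; by strict concavity the maximizer $\bfX^\star$ is unique, so both scaled matrices equal $\bfX^\star$ and only then does the ratio identity follow. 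A second, minor, inaccuracy: full indecomposability is \emph{not} equivalent to connectedness of the bipartite support graph (e.g.\ a triangular matrix with positive diagonal has a connected support graph but is partly decomposable); it is strictly stronger. Only the implication "fully indecomposable $\Rightarrow$ connected" is needed for propagating the ratios along the graph, and that implication is true, so the argument survives once the equivalence claim is weakened to an implication.
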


Now, consider the following optimization problem, which consists in
finding an $n\times n$ bistochastic matrix $\bfX=(\bfx_{ij})$ maximizing the
following relative entropy
\begin{equation}
\label{eq:pentropyopt}
\max_{\bfX\in \bisto_n} J_p(\bfX),\qquad J_p(\bfX):=\sum_{ij}\bfx_{ij}b_{ij} + 
p^{-1}S(\bfX ), \quad b_{ij}=\log a_{ij},
\end{equation}
where 
\[
\qquad S(\bfX):=-\sum_{ij}\bfx_{ij}\log \bfx_{ij} \enspace 
\]
is the entropy function, $p>0$ is a parameter, and $\bisto_n$ denotes the set of $n\times n$ bistochastic matrices.
We define $0\times (-\infty)\equiv 0$ in the context of the product $\bfx_{ij}b_{ij}$.

We shall assume that the matrix $A:=(a_{ij})$ has total support,
so that the diagonal matrices $D_r$ and $D_c$ are known to exist. 
We denote by $G(A):=\{(i,j)\mid a_{ij}>0\}$
the {\em pattern} (set of non-zero entries) of the matrix $A$.

The general relation between the entropy maximization and 
scaling problems is well known, see e.g.~\cite{MR986907} for an overview.

\begin{theorem}[Corollary of~{\cite[Th.~3.1]{borweinlewisnussbaum}} also~\cite{ando89}]
\label{prop-borweinlewisnussbaum}
Let $A$ be a matrix with total support. Then, the solution $\bfX(p)$  of the entropy 
maximization problem indicated in Equation~\ref{eq:pentropyopt}
is unique and it is characterized by the existence of two 
diagonal matrices $D_r$ and $D_c$, such that $\bfX=D_rAD_c$.
\end{theorem}

Thus, the characterization of the theorem shows that
$\bfX$ is obtained from the $p$th Hadamard power
$A^{(p)}:=(a^p_{ij})$ by a diagonal scaling. 
The previous theorem is a special case of Theorem~3.1 of~\cite{borweinlewisnussbaum},
which is established in a more general infinite dimensional setting
(for $p=1$; but the result for an arbitrary $p$ follows trivially from it).

\subsection{Convergence to optimal assignment}

We now study the convergence of $\bfX(p)$ as $p$ tends to infinity.
We shall consider the face $F$ of the polytope of bistochastic matrices consisting of the optimal solutions of the linear programming
formulation of the optimal assignment problem
\[
\max_{\bfx \in\bisto_n} \sum_{ij}\bfx_{ij}b_{ij} =\max_{\sigma\in \mathfrak{S}_n}\sum_i b_{i\sigma(i)}\enspace .
\]
\begin{theorem}
\label{thm:prop-converge1}
As $p$ tends to infinity, the matrix $\bfX(p)$ converges to the unique matrix
$\bfX^*$ maximizing the entropy among the ones that belong to the face
$F$ consisting of the convex hull of optimal permutation matrices.
In particular, if the solution
of the optimal assignment problem is unique, then $\bfX(p)$ converges
to the associated bistochastic matrix.
\end{theorem}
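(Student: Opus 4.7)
The plan is to combine compactness with the standard ``tie-breaking via entropy'' argument: the optimization defining $\bfX(p)$ is a penalized version of the LP defining $F$, with the entropy playing the role of a strictly concave penalty that vanishes as $p\to\infty$.

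First I would establish existence and uniqueness of $\bfX^*$. Since $F$ is a compact convex subset of $\bisto_n$ and $S$ is continuous on $\bisto_n$ (with the convention $0\log 0=0$), $S$ attains its maximum on $F$. Strict concavity of $t\mapsto -t\log t$ on $[0,1]$ implies that $S$ is strictly concave on $\bisto_n$, so this maximizer is unique; call it $\bfX^*$. Note also that $S$ is bounded on $\bisto_n$.

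The core estimate comes from comparing $\bfX(p)$ with $\bfX^*$. Let $L^*:=\max_{\bfX\in \bisto_n}\sum_{ij}\bfx_{ij}b_{ij}$, so $\sum_{ij}\bfx^*_{ij}b_{ij}=L^*$. Using $J_p(\bfX(p))\geq J_p(\bfX^*)$ and rearranging, I get
\[
0\leq L^*-\sum_{ij}\bfx_{ij}(p)b_{ij}\leq p^{-1}\bigl(S(\bfX(p))-S(\bfX^*)\bigr) \enspace .
\]
Since $S$ is bounded, the right-hand side is $O(1/p)$, so $\sum_{ij}\bfx_{ij}(p)b_{ij}\to L^*$. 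By compactness of $\bisto_n$, any subsequence of $\bfX(p)$ admits a further subsequence converging to some $\bfX^{**}\in\bisto_n$; continuity of the linear functional $\bfX\mapsto\sum_{ij}\bfx_{ij}b_{ij}$ then forces $\sum_{ij}\bfx^{**}_{ij}b_{ij}=L^*$, i.e.\ $\bfX^{**}\in F$.

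The same inequality, rewritten as $S(\bfX(p))\geq S(\bfX^*)+p\bigl(L^*-\sum_{ij}\bfx_{ij}(p)b_{ij}\bigr)\geq S(\bfX^*)$, and the continuity of $S$ yield $S(\bfX^{**})\geq S(\bfX^*)$. Since $\bfX^{**}\in F$ and $\bfX^*$ is the unique maximizer of $S$ on $F$, this forces $\bfX^{**}=\bfX^*$. Every accumulation point of $\bfX(p)$ equals $\bfX^*$, so by compactness the whole net converges to $\bfX^*$. The particular case of a unique optimal permutation follows because then $F$ reduces to a single vertex of $\bisto_n$, which must be $\bfX^*$.

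The main obstacle is essentially bookkeeping: writing the two-sided inequality $0\leq L^*-\sum \bfx_{ij}(p)b_{ij}\leq p^{-1}(S(\bfX(p))-S(\bfX^*))$ so that it simultaneously yields membership in $F$ (from the right side vanishing) and the entropy-maximality on $F$ (from the left side being nonnegative). Strict concavity of $S$ on $\bisto_n$, which underlies the uniqueness of $\bfX^*$, is the one nontrivial ingredient; but it follows directly from the strict concavity of $-t\log t$ on $[0,1]$ applied entrywise.
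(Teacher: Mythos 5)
Your proof is correct and follows essentially the same route as the paper: both start from $J_p(\bfX(p))\geq J_p(\bfX^*)$, split the resulting inequality into the two nonnegative pieces (your two-sided estimate is exactly the paper's ``sum of two nonnegative terms''), deduce that every accumulation point lies in $F$ and has entropy at least $S(\bfX^*)$, and conclude by strict concavity of $S$. Your additional remarks on the existence/uniqueness of $\bfX^*$ and the boundedness of $S$ just make explicit what the paper leaves implicit.
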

\begin{proof}
Since $\bfX(p)$ is the point of maximum of $J_p$,
\begin{eqnarray*}
J_p(\bfX(p))&&= \sum_{ij}\bfx_{ij}(p)b_{ij} +p^{-1}S(\bfX(p))\\
&&\geq J_p(\bfX^*)= \sum_{ij}\bfx_{ij}^*b_{ij} +p^{-1}S(\bfX^*)\\
&& = \max_{\sigma\in \mathfrak{S}_n}\sum_{i}b_{i\sigma(i)}+p^{-1}S(\bfX^*)
\end{eqnarray*}
Consider a sequence $(p_k)_{k\geq 1}$ converging to infinity, and 
assume that $\bfX(p_k)$ converges to some matrix $\bfZ$, which must
belong to $\bisto_n$. Setting $p=p_k$ in the previous inequality
and taking the limit as $k$ tends to infinity,
we get $\sum_{ij}\bfz_{ij}b_{ij}\geq  \max_{\sigma\in \mathfrak{S}_n}\sum_{i}b_{i\sigma(i)}$, which shows that $\bfZ$ belongs to the face $F$. 

Observe that
\[
p_k^{-1}(S(\bfX(p_k))-S(\bfX^*))=
\left(J_{p_k}(\bfX(p_k)) -J_{p_k}(\bfX^*)\right)+
\bigl(\sum_{ij}\bfx^*_{ij}b_{ij}-\sum_{ij}\bfx_{ij}(p_k)b_{ij}
\bigr)
\]
is the sum of two non-negative terms, because
$X(p_k)$ is a point of maximum of $J_{p_k}$, 
and $X^*\in F$ is a convex hull of matrices representing
optimal permutations. It follows that $S(\bfX(p_k))-S(\bfX^*)\geq 0$,
and so, if $Z$ is any accumulation point of $\bfX(p_k)$ as $k$ tends
to infinity, $S(Z)-S(\bfX^*)\geq 0$, showing that $Z$
is of maximal entropy among the matrices in $F$. Since the
entropy function is strictly convex, $X^*$ is the only point
with the latter property, and so every accumulation point of $\bfX(p_k)$
is equal to $X^*$, showing that $\bfX(p)$ converges to $X^*$ as $p\to\infty$.
\end{proof}

\begin{corollary}
If there is only one optimal permutation, then $\bfX(p)$ converges
to the corresponding permutation matrix.
\end{corollary}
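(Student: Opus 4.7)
The plan is to obtain this immediately from Theorem~\ref{thm:prop-converge1}, whose content already does all the work. I just have to observe that when the optimal assignment problem has a unique solution $\sigma^{*}\in\mathfrak{S}_{n}$, the face $F$ appearing in the theorem collapses to a single point.

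More precisely, I would argue as follows. The face $F$ is defined as the convex hull of the permutation matrices $P_{\sigma}$ associated with the permutations $\sigma$ that achieve $\max_{\sigma\in\mathfrak{S}_{n}}\sum_{i}b_{i\sigma(i)}$. Under the uniqueness hypothesis there is exactly one such permutation $\sigma^{*}$, so $F=\operatorname{conv}\{P_{\sigma^{*}}\}=\{P_{\sigma^{*}}\}$ is a singleton. In particular the matrix $\bfX^{*}\in F$ of maximal entropy in $F$ supplied by Theorem~\ref{thm:prop-converge1} can only be $P_{\sigma^{*}}$ itself (there is nothing to maximize over). Applying the theorem, $\bfX(p)\to P_{\sigma^{*}}$ as $p\to\infty$, which is exactly the content of the corollary.

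There is really no obstacle here: the corollary is an immediate specialization of the theorem, and the only thing worth spelling out is the trivial remark that a convex hull of a single point is that point. I would therefore keep the proof to two or three lines, citing Theorem~\ref{thm:prop-converge1} and noting the collapse of $F$.
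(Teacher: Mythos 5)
Your proposal is correct and is exactly the intended argument: the paper gives no separate proof because the corollary is the immediate specialization of Theorem~\ref{thm:prop-converge1} (whose statement already notes the unique-permutation case), with $F$ collapsing to the single permutation matrix. Nothing further is needed.
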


\subsection{Speed of convergence}
\label{sec:maxent}
We have already shown in Theorem~\ref{thm:prop-converge1}
that the maximal entropy solution $\bfX(p)$ converges
as $p$ tends to infinity, to a matrix $\bfX(\infty)$ which is a convex hull
of optimal
permutation matrices. In particular, $\bfX(p)$ converges to
an optimal permutation matrix if the optimal permutation is unique.
Now, the question is how fast this convergence is. This is answered by the next
results.

Following~\cite{hardy},
we call {\em generalized Dirichlet series}
in a parameter $t$ a sum
\begin{align}
s:=\sum_{\alpha\in \mathbb{R}} c_\alpha t^\alpha \enspace,
\label{e-dirichlet}
\end{align}
where the coefficients $c_\alpha \in \mathbb{C}$ 
are such that $\operatorname{supp} s:=
\{\alpha \in \mathbb{R}\mid c_\alpha\neq 0\}$ is either a finite
(possibly empty) set or a denumerable set having $\infty$ as the only accumulation point (in particular, a finite number of monomials with negative powers of $t$ may appear in the series).

\begin{theorem}\label{th-dirichlet}
Assume that the matrix $A$ has total support.
Then, every entry $\bfx_{ij}(p)$ is given by a generalized Dirichlet series 
in the parameter $t=\exp(-p)$ that is absolutely convergent in
some punctured disk $0<|t|<\delta$.
\end{theorem}
This field was used in particular 
by Akian, Bapat and Gaubert~\cite{ABG96} to address
a somehow related asymptotic problem, concerning the Perron eigenvector of the matrix $A^{(p)}$ as $p\to\infty$. The corresponding field of {\em formal} 
generalized Dirichlet series is also a useful tool in tropical
geometry, as pointed out by~\cite{Markwig}. 
Before proving this theorem, we derive the following
corollary.
\begin{corollary}
\label{exponential_conv}
Assume that the matrix $A$ has total support.
Then, there exists a positive constant $c$ such that,
\[
|\bfx_{ij}(p)- \bfx_{ij}(\infty)|= O(\exp(-cp))
\]
holds for all $1\leq i,j\leq n$, as $p\to \infty$.
\end{corollary}
\begin{proof}
We already showed that $\bfx_{ij}(p)$ converges to $\bfx_{ij}(\infty)$
as $p\to \infty$. It follows that the leading monomial of the Dirichlet
series expansion of $\bfx_{ij}(p)$ is $\bfx_{ij}(\infty)$, and that the
next monomial is necessarily of the form $d_{ij}\exp(-c_{ij}p)$
for some $c_{ij}>0$ and $d_{ij}\in \mathbb{R}$ (we adopt the convention
that $d_{ij}=0$ if $\bfx_{ij}(p)$ is constant near $p=\infty$).
Then, it suffices to take for $c$ the minimum of all the $c_{ij}$ obtained in this way.
\end{proof}

The proof of Theorem~\ref{th-dirichlet} uses a model theory 
result of~\cite{vandendries98},
who constructed a o-minimal expansion
of the field of real numbers which is such that the definable
functions in one variable correspond to generalized
Dirichlet series that are absolutely convergent in a punctured
disk. We briefly recall their construction, referring the reader
to the monography~\cite{vandendriescambridge} for more background
on o-minimal structures, and in particular
for the definition of the notions used here.

For any $m\geq 1$, let $t_1,\dots, t_m$ be commuting
variables, and consider a formal series
\[
F= \sum_\alpha c_\alpha t^\alpha , \; \alpha=(\alpha_1,\dots,\alpha_m),\qquad
t^\alpha:= t_1^{\alpha_1}\dots t_m^{\alpha_m}  \enspace,
\]
where the multi-index $\alpha$ ranges over $[0,\infty)^m$. 
The support of $F$ is now $\operatorname{supp} F:=\{\alpha \mid c_\alpha\neq 0\}$.
We denote by $\mathbb{R}[[\{t_1,\dots,t_m\}^+]]$ the $\mathbb{R}$-algebra
of formal series the support of which is included
in a Cartesian product $S_1\times \dots\times S_m$ where
every $S_i$ is either a finite subset of $\mathbb{R}$ or a denumerable
subset of $\mathbb{R}$ having $\infty$ has the only accumulation
point. For each $r=(r_1,\dots,r_m)$ with $0<r_i<\infty$ for $i=1,\dots,m$,
we set $\|F\|_r := \sum_\alpha |c_\alpha|r^\alpha$, and denote
by $\mathbb{R}\{\{t_1,\dots,t_m\}^+\}_r$ the subalgebra of $\mathbb{R}[[\{t_1,\dots,t_m\}^+]]$
consisting of those $F$ such that $\|F\|_r<\infty$. Then,
we denote by $\mathbb{R}_{\text{an}+}$ 
the expansion of the real ordered field 
$(\mathbb{R},<,0,1,+,-,\cdot)$ by the collection
of all functions
$f: \mathbb{R}^m \to \mathbb{R}$, with $m\in \mathbb{N}$,
such that $f$ is $0$ outside $[0,1]^m$ and that it is
given on $[0,1]^m$ by a power series $F\in \mathbb{R}\{\{t_1,\dots,t_m\}^+\}_r$
for some $r=(r_1,\dots,r_m)$ with $r_1>1,\dots,r_m>1$. 
The following theorem follows from~\cite{vandendries98}.
\begin{theorem}[See Theorem~B and~\S~10, Paragraph~2 in~{\cite{vandendries98}}]
\label{th-B+}
Let $\epsilon>0$ and let $f: (0,\epsilon)\to \mathbb{R}$ be definable
in $\mathbb{R}_{\text{an}+}$. Then, there exists a generalized
Dirichlet series $F\in \mathbb{R}\{\{t\}^+\}$ in a single variable
$t$ that is absolutely convergent in a punctured disk $0<|t|<\delta$
for some $\delta<\epsilon$, such that $f(t)=F(t)$ holds
for all $0<|t|<\delta$.
\end{theorem}
Actually, Theorem~B in~\cite{vandendries98} deals with a larger o-minimal
structure, $\mathbb{R}_{\text{an}*}$, in which it is
only required that every set $S_i$ is well ordered in
the above construction. The latter theorem shows that every definable
function in one variable in this larger structure
coincides with a Hahn series (series
with well ordered support) that is absolutely convergent
in a punctured disk. However, it is remarked in Section~10
of~\cite{vandendries98} that the same statement remains
true if ``well ordered'' is
replaced by ``finite or denumerable with $\infty$ as
the only accumulation point'' in the construction
of $\mathbb{R}_{\text{an}*}$ and if Hahn series
are replaced by generalized Dirichlet series.

\begin{proof}[of Theorem~\ref{th-dirichlet}]
We make the change of variable $t=\exp(-p)$, and
we will write $\bfX'(t)$ for $\bfX(-\log t)$.

Since the $n\times n$ matrix $A$
is non-negative and has total support, so does $A^{(p)}$ for all $0<p<\infty$,
and so, the solution
$\bfX(p)$ of the entropy maximization problem
is the only matrix $\bfX$ such that there
there exist
diagonal $n\times n$ matrices $D_r$ and $D_c$ with positive diagonal entries such that
$A^{(p)}=D_r \bfX D_c$. Every non-zero entry of the matrix
$A^{(p)}$ can be written as $a_{ij}^p = t^{-\log a_{ij}}$. In particular,
the function $t \mapsto a_{ij}^p$ belongs to $\mathbb{R}[[\{t\}^+]]_r$ for
every $r\in (0,\infty)$. It follows that 
for every $r\in (0,\infty)$, the function
 $f:(0,r)\to \mathbb{R}, t \mapsto \bfX'(t)$ is definable in the structure $\mathbb{R}_{\text{an+}}$. Hence,
by Theorem~\ref{th-B+}, every entry of $\bfX'(t)$
has an expansion as a generalized Dirichlet series in the variable
$t$ and this series is absolutely convergent in a punctured disk $0<|t|<\delta$.
\end{proof}

\begin{remark}
\textnormal{The formulation~\eqref{eq:pentropyopt} is somehow reminiscent of 
interior point methods, in which the entropy $S(X)=-\sum_{ij}x_{ij}\log x_{ij}$ is 
replaced by a log-barrier function (the latter would be $\sum_{ij}\log x_{ij}$ in the present setting). 
The present $X(p)$ thought of as a function of $p\to\infty$ is 
analogous to the {\em central path}, and as does the central path, $X(p)$ converges 
to a face containing optimal solutions. However, the entropy $S(X)$ does not satisfies the 
axioms of the theory of self-concordant barriers 
on which the analysis of interior point methods is based. Indeed, the speed of 
convergence in $O(\exp(-cp))$ appears to be of a totally different nature by 
comparison with the speed of $O(1/p)$ observed in interior point methods~\cite{nesterovnemirovski}.}
\end{remark}
\begin{example}
\textnormal{
The constant $c$ appearing in Corollary~\ref{exponential_conv} can be
small if there are several nearly optimal permutations, and then
a large value of $p$ may be needed to approximate $X(\infty)$. However,
in such cases, a much smaller value of $p$ turns out to be enough
for the method described in the next sections, the aim of which is to eliminate
a priori entries not belonging to (nearly) optimal permutations.
This is illustrated by the following matrix, in which the 
identity permutation is optimal, and the transposition $(1,2)$
is nearly optimal:
\[
A=
\begin{pmatrix}
1 &  0.99 &  0.99 \\
0.99 &   1 &  1/3\\
0.25  &  0.5 &     1  
\end{pmatrix}
\enspace .
\]
For $p=10$, we have the following matrix, 
the significant entries of which indicate precisely the
optimal and nearly optimal permutations:
\[
\begin{pmatrix}
0.5195148 &  0.4595136  & 0.0210196 \\
0.4804643 &  0.5195864  & 0.0000004 \\
0.0000209 &  0.0209000  & 0.9789800  
\end{pmatrix} \enspace .
\]
The convergence of $X(p)$ to $X(\infty)$ is illustrated in Figure~\ref{centeral_path1}. 
Observe that the graph of $\log x_{ij}(p)$ as a function of $p$ is approximately
piecewise affine. In fact, each piece corresponds to a monomial 
in the generalized Dirichlet series expansion~\eqref{e-dirichlet}.
The path $p\mapsto X(p)$ converges quickly to the face
containing the two nearly optimal permutations and slowly to the unique
optimal permutation.}
\begin{figure}[t!]
\centering
\includegraphics[scale=0.45]{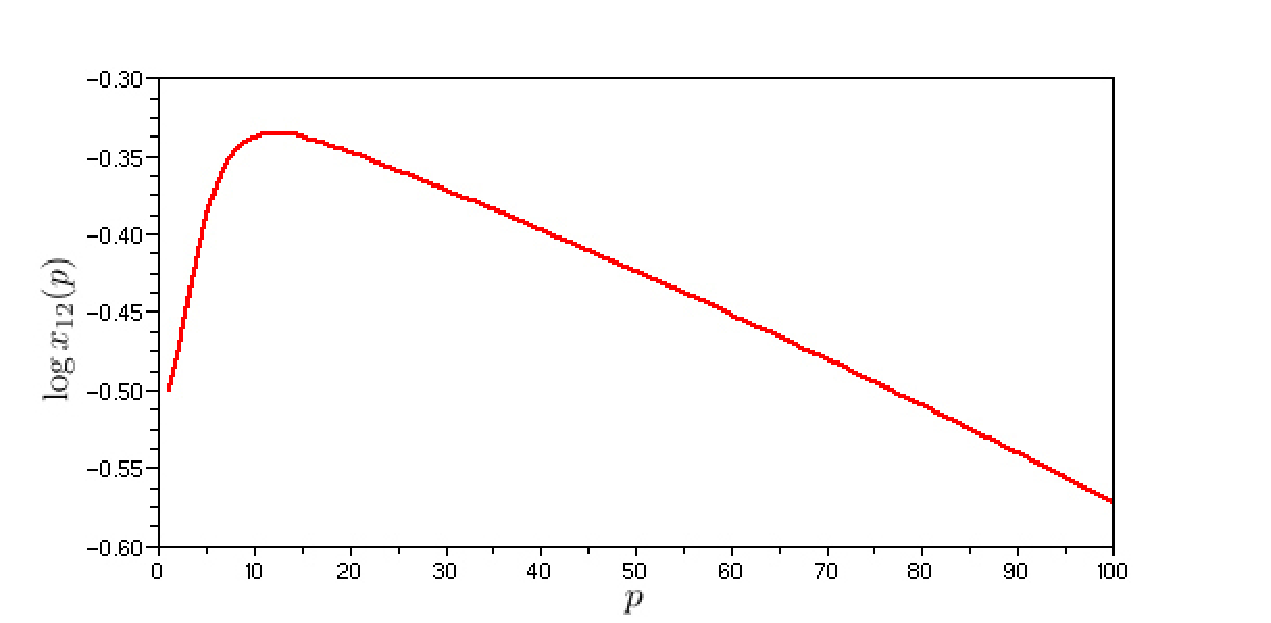} 
\caption{The variation of $\log_{10} x_{12}(p)$ as a function of $p$.} 
\label{centeral_path1}
\end{figure}
\end{example}

\begin{remark}
\textnormal{
Finding an explicit formula for the speed of convergence $c$ appears to be an interesting combinatorial problem (which is beyond the scope of this paper).}
\end{remark}

\section{Deformed Sinkhorn iteration}
\label{sec:conv}

In this section, we consider the Sinkhorn iteration, which is probably
the most classical way of 
computing the diagonal matrices $D_r,D_c$ of Theorem~\ref{prop-borweinlewisnussbaum}, leading to the solution of the entropy maximization problem.
We develop a ``path following method'' in which
the value of $p$ is gradually increased in the course
of Sinkhorn iterations.
We prove that if the matrix $A$ has support ($A$ has support if it has a positive diagonal), 
and if the growth of $p$ is moderate enough, then the sequence of matrices produced by the 
algorithm converges to a point which belongs to the face generated by optimal permutations.
The results of this section leads to an algorithm to compute the 
optimal assignment which we refer to as~\textit{deformed Sinkhorn iteration}.
This algorithm is mostly interesting from the theoretical point of view since the value of $p$ 
increases slowly in the course of the algorithm which yields a slow convergence to the solution. 

\subsection{Sinkhorn iteration}
\label{subsec:psinkh}
A simple way to compute the diagonal matrices $D_r,D_c$ is Sinkhorn iteration~\cite{sinkhorn67}.
This algorithm starts from a given matrix $A$, divides
every row by its sum, then every column of the new matrix
by its sum, and so on, until the matrix obtained in this way converges to
a bistochastic matrix. 
The advantage of this algorithm is that it can be efficiently implemented in 
parallel~\cite{patrickduff08} and it can be applied to any non-negative matrix which has
at least one nonzero permutation. The disadvantage is that, 
it is generally slower than other methods.

Recall first that the open cone $C=\{x\in \mathbb{R}^n: x_i>0,\forall i\}$
consisting of positive vectors of $\mathbb{R}^n$ 
is equipped with Hilbert's projective metric, defined by
\[
d(x,x')=\log \max_{i,j}\frac{x_ix'_j}{x'_ix_j}
\]
Note that $d(x,x')$ is zero if and only if the vectors $x$ and $x'$ are proportional. 
We refer to~\cite[\S~6]{Bapat97} for more background. In particular, if $A$ is a 
positive matrix, a theorem of Birkhoff shows that the map $x\mapsto Ax$ is a contraction 
in Hilbert's projective metric, with a contraction rate
\[
\kappa(A):=\sup\{\frac{d(Ay,Ay')}{d(y,y')}:y,y'\in C,y,y'\text{\ non proportional}\}
= 
\frac{\theta(A)^{1/2}-1}{\theta(A)^{1/2}+1}  \enspace ,
\]
where
\[
\theta(A)=\exp\sup\{d(Ay,Ay'):y,y'\in C\}=\max_{i,j,p,l}\frac{a_{ir}a_{jl}}{a_{jr}a_{il}}
\]
The following result is a consequence of this theorem.

\begin{proposition}[\cite{Frank89}]
For a positive matrix $A$, the global rate of convergence of Sinkhorn iteration is bounded above by $\kappa(A)^2$.
\end{proposition}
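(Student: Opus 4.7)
The plan is to represent one full cycle of Sinkhorn iteration as a composition of two multiplications by $A$ or $A^\top$ and two componentwise reciprocals, then apply Birkhoff's contraction theorem (quoted above) to each linear piece. Parametrize the iterates by the pair of scaling vectors: if $u^{(k)},v^{(k)}$ denote the row and column scalings after $k$ full cycles, a short calculation gives
\[
u^{(k+1)}=1/(Av^{(k)}),\qquad v^{(k+1)}=1/(A^\top u^{(k+1)}),
\]
where the reciprocal is taken componentwise. Hence one cycle is the map
\[
T=\iota\circ A^\top\circ\iota\circ A,\qquad \iota(x):=(1/x_1,\ldots,1/x_n),
\]
which is positively homogeneous of degree one and therefore descends to the projective quotient $C/\mathbb{R}_{>0}$, on which the Hilbert pseudo-metric $d$ becomes a genuine complete metric.

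Next I would bound the contraction rate of $T$ in Hilbert's projective metric. Birkhoff's theorem yields $d(Ax,Ax')\le \kappa(A)\,d(x,x')$, and the same bound applies to $A^\top$ since $\theta(A^\top)=\theta(A)$ by the symmetry of the defining formula. The componentwise reciprocal $\iota$ is an isometry for $d$: substituting $1/x$ for $x$ in the definition and swapping the roles of the indices $i$ and $j$ in the maximum yields
\[
d(\iota(x),\iota(x'))=\log\max_{i,j}\frac{x_jx'_i}{x_ix'_j}=d(x,x').
\]
Composing the four factors of $T$ and using that a product of a contraction with rate $\kappa(A)$, an isometry, a contraction with rate $\kappa(A)$, and another isometry is a contraction with rate $\kappa(A)^2$, one obtains
\[
d(T(v),T(v'))\le \kappa(A)^2\,d(v,v').
\]

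With this contraction in hand I would invoke Banach's fixed point theorem on the projective quotient. By Sinkhorn's theorem there exists a unique (up to a positive scalar) $v^*$ with $T(v^*)=v^*$, corresponding to the bistochastic scaling, and iteration then gives
\[
d(v^{(k)},v^*)\le \kappa(A)^{2k}\,d(v^{(0)},v^*).
\]
A final step translates this into a bound on the Hilbert distance between the matrix iterate $B_k:=D_{u^{(k)}}AD_{v^{(k)}}$ and its bistochastic limit, using that multiplication by a positive diagonal matrix on either side of a positive matrix is an isometry for Hilbert's metric applied to matrix entries viewed as a positive vector in $\mathbb{R}^{n^2}$.

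The main difficulty is bookkeeping rather than any sharp inequality: one must check that the Hilbert pseudo-metric becomes a true metric precisely on the projective quotient, that the iteration is well defined on that quotient, and that the bound on the projective distance of the scaling vectors captures the intended \emph{global} rate of convergence of the matrix iterates rather than merely an asymptotic one. The contraction estimate itself is immediate once $T$ has been written as $\iota\circ A^\top\circ\iota\circ A$.
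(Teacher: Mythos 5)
Your argument is correct and is precisely the derivation the paper alludes to: the paper states this proposition as a consequence of Birkhoff's contraction theorem and cites Franklin--Lorenz for the details, and your decomposition of one Sinkhorn cycle as $\iota\circ A^{\top}\circ\iota\circ A$ (two $\kappa(A)$-contractions interleaved with two Hilbert-metric isometries) is exactly their proof. The only point to keep an eye on is the loose phrasing of the last step: diagonal scaling on both sides is not literally an isometry on $\mathbb{R}^{n^2}_{>0}$, but the needed bound $d(D_uAD_v,\,D_{u'}AD_{v'})\le d(u,u')+d(v,v')$ follows immediately from the definition.
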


This general bound is applicable only for positive matrices and it can be coarse in practice.
We shall use this contraction rate to prove the convergence 
of our \textit{deformed Sinkhorn iteration} in Section~\ref{sec:deformed_conv}.

It is proved by~\cite{soules} that the rate of convergence of Sinkhorn 
algorithm is always linear when the 
input matrix, $A$, has total support. He defines, $\bfX^{k+1}=f(\bfX^k)$ where, $X^0=A$ and $f$ is 
an operator 
which divides each row by its sum and then divides each column by its sum. He also defines 
a scalar,
\[
\xi=\limsup_k\frac{\|\bfX^{k+1}-\bfX^*\|}{\|\bfX^{k}-\bfX^*\|}
\]
for a given norm $\|.\|$, where $X^*$ denotes the final bistochastic matrix. 
Since the value of $\xi$ is norm dependent, he called the convergence rate to be linear if for some norm,
$0<\xi<1$, which he proved for Sinkhorn iteration. 
More recently,~\cite{knight} provided a local rate of convergence. 
Due to his work, for classical Sinkhorn iteration the local rate of convergence
of a fully indecomposable matrix, 
is bounded by $\sigma_2^2$ where $\sigma_2$ is the second singular 
value of the bistochastic matrix to which the iteration converges. 
Hence, the following result allows us to estimate the local convergence
rate of Sinkhorn iteration, as $p\to\infty$.

\begin{proposition}
Assume that there is only one optimal permutation. Then, there is
a constant $c>0$ such that
\[ 
1-O(\exp(-cp)) \leq \sigma_2(X(p))\leq 1  \qquad \text{ as} \;p\to \infty
\]
Assume now that the matrix $X(\infty)$ is fully indecomposable (which implies
that there are several optimal permutations). Then, 
\[
\sigma_2(X(p))\to \sigma_2(X(\infty))<1 \qquad\text{ as}\; p\to\infty
\enspace .
\]
\end{proposition}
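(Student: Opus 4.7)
The plan is to derive both statements from the continuity of singular values together with the convergence of $X(p)$ already established in Theorems~\ref{thm:prop-converge1} and~\ref{exponential_conv}.

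For the first assertion, since there is a unique optimal permutation, the Corollary to Theorem~\ref{thm:prop-converge1} identifies $X(\infty)$ with the corresponding permutation matrix; this matrix is orthogonal, so $\sigma_2(X(\infty))=1$. The upper bound $\sigma_2(X(p))\le 1$ is a general property of bistochastic matrices, since $\|X(p)\|_2 \le (\|X(p)\|_1 \|X(p)\|_\infty)^{1/2}=1$, while $X(p)\mathbf{1}=\mathbf{1}$ forces $\sigma_1(X(p))=1$, and the remaining singular values satisfy $\sigma_i(X(p)) \le \sigma_1(X(p))$. For the lower bound I would invoke Weyl's perturbation inequality for singular values, $|\sigma_2(X(p))-\sigma_2(X(\infty))| \le \|X(p)-X(\infty)\|_2$, and bound the right-hand side by the Frobenius norm, which is $O(\exp(-cp))$ by Theorem~\ref{exponential_conv}.

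For the second assertion, Theorem~\ref{thm:prop-converge1} yields $X(p)\to X(\infty)$, and continuity of singular values gives $\sigma_2(X(p))\to \sigma_2(X(\infty))$. The nontrivial point is therefore the strict inequality $\sigma_2(X(\infty))<1$. My plan is to study $M:=X(\infty)^T X(\infty)$, which is symmetric bistochastic (a direct check on the row and column sums) and whose eigenvalues are the squared singular values of $X(\infty)$, with $1$ always among them (eigenvector $\mathbf{1}$). Once $M$ is shown to be irreducible as a nonnegative matrix, the Perron--Frobenius theorem makes $1$ a simple eigenvalue of $M$, forcing $\sigma_2(X(\infty))^2<1$, which is the bound we need.

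The main obstacle is the passage from full indecomposability of $X(\infty)$ to irreducibility of $M$. The off-diagonal support of $M$ is the column-intersection graph of $X(\infty)$: one has $M_{ij}\ne 0$ iff some row $k$ has both $X(\infty)_{ki}$ and $X(\infty)_{kj}$ positive. If $M$ were reducible, there would exist a nontrivial partition of the column indices into sets $S$ and $T$ such that no row of $X(\infty)$ carries nonzero entries in both a column of $S$ and a column of $T$. Partitioning the rows accordingly and using bistochasticity to conclude that the set of rows meeting $S$ has cardinality exactly $|S|$, one obtains a simultaneous permutation of rows and columns that puts $X(\infty)$ in block-diagonal form, contradicting full indecomposability.
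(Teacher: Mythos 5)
Your proof is correct, and the first half coincides with the paper's argument: the paper also gets the upper bound from the fact that bistochastic matrices have operator norm $1$, and the lower bound from the Mirsky/Weyl perturbation inequality for singular values combined with the entrywise rate $O(\exp(-cp))$ of Theorem~\ref{exponential_conv} (note that both you and the paper therefore implicitly inherit that theorem's hypothesis $\log a_{ij}\in\Q$). The second half is where you genuinely diverge. The paper forms $M=X(\infty)X(\infty)^T$, quotes the fact that a product of fully indecomposable matrices is fully indecomposable, observes $m_{ii}=\sum_j x_{ij}^2>0$ so that $M$ is primitive, and then applies the Perron--Frobenius theorem for primitive matrices to get a strict spectral gap. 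You instead prove irreducibility of $M=X(\infty)^TX(\infty)$ from scratch: reducibility of $M$ would split the columns of $X(\infty)$ into two sets no row straddles, bistochasticity forces the matching row set to have the same cardinality, and the resulting block structure contradicts full indecomposability. You then conclude from simplicity of the Perron root, which suffices here because the eigenvalues of $M$ are the squared singular values and hence nonnegative reals, so no other eigenvalue can sit on the spectral circle. Your route is self-contained and avoids citing the (true but nontrivial) product lemma; the paper's route is shorter modulo that citation. Both are valid.
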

\begin{proof}
Let $\sigma_1\geq\sigma_2\geq\ldots\geq\sigma_p$ 
and $\tilde{\sigma_1}\geq \tilde{\sigma_2}\geq\ldots\geq\tilde{\sigma_p}$
denote the singular values of two $n\times n$ matrices, $X$ and $\tilde{X}$ respectively.
Define  a diagonal matrix $D$ such that $d_{ii}=\tilde{\sigma_i}-\sigma_i$.
Due to the perturbation theorem of~\cite{Mirsky60} 
for any unitarily invariant norm $\| . \|$ 
we have, $\|D\|\leq\|\tilde{X}-X\|$.
So, for $X(p)$ and $X(\infty)$,
\[
|{\sigma_2}{(X(p))}-{\sigma_2}{(X(\infty))}|\leq \|X(p)-X(\infty)\|_2 \leq  O(\exp(-cp))
\]
for which the constant $c$ depends on the coefficients of the Puiseux series and possibly on the dimension of $X(p)$.
Thus, if the original matrix has only one optimal permutation, ${\sigma_2}{(X(\infty))}=1$ which implies that 
\[
1- O(\exp(-cp))\leq{\sigma_2}{(X(p))}
\] 
Moreover according to the Birkhoff-von Neumann theorem~\cite{birkhoff46}, 
for any norm $\|. \|$ on $\R^n$ which is invariant under permutation 
of the coordinates and for any bistochastic matrix $X$, $\|X\|=1$ and subsequently
\[
1-O(\exp(-cp)) \leq \sigma_2(X(p))\leq 1 
\] 

When $X(\infty)$ is fully indecomposable, 
since the multiplication of two fully indecomposable matrices is also fully 
indecomposable, $M=X(\infty)X^T(\infty)$ is fully indecomposable. 
Note also that for all $1\leq i\leq n$,
$m_{ii}=\sum_{j=1}^n x_{ij}^2>0$, which implies that $M$ is primitive.
Then, according to the Perron-Frobenius theorem, all the eigenvalues of $M$ 
distinct from $\rho(M)$ have a modulus
strictly smaller than $\rho(M)=1$ which yields $\sigma_2(X(\infty))<1$.  
\end{proof}

\subsection{Definition of deformed Sinkhorn iteration}

Let $A\in\R^{n\times n}$ be a real non-negative matrix.
The standard Sinkhorn iteration is defined as follows
\begin{eqnarray*}
&&Z_0=\ROWS (A)\enspace;\\
&&W_{m}=\COLS(Z_{m-1})\enspace;\\
&&Z_{m}=\ROWS(W_{m})\enspace;\\
\end{eqnarray*}
where $W_{m}$ and $Z_{m}$ respectively, are column scaled and row scaled matrices 
and $\COLS$ denote the column scaling operator in which all the columns 
of a matrix are divided by their sums and $\ROWS$ be the similar operator for rows. 
It is easy to verify that, $\ROWS(DB)=\ROWS(B)$ and $\COLS(BD)=\COLS(B)$ for any diagonal matrix $D$. 
Now consider the following iteration for a sequence of vectors $u_m,v_m\in \R^n$ 
\begin{eqnarray}
\label{eq:dad1}
&&v_0=\unit\\
\label{eq:dad2}
&&u_{m+1}=\I(Av_m)\\
\label{eq:dad3}
&&v_{m+1}=\I(A^Tu_{m+1})
\end{eqnarray}
where $\unit$ denotes the vector $(1,1,\ldots,1)^T$ of dimension $n$ and $\I$ 
denotes the operator which inverts every entry of a vector. 
In the sequel, for all vectors $v=(v_1,\ldots,v_n)^T\in\R^n$,
$\diag(v)$ denotes the diagonal matrix with diagonal entries
$v_1,\dots,v_n$.

\begin{proposition}
For a non-negative matrix $A$, which has total support, the iteration 
defined by Equations~\ref{eq:dad1},~\ref{eq:dad2} and~\ref{eq:dad3} coincides with Sinkhorn iteration
such that 
\begin{eqnarray*}
&&W_{m}=\diag(u_{m})A\diag(v_{m})\\
&&Z_{m}=\diag(u_{m+1})A\diag(v_m)\\
\end{eqnarray*}
\end{proposition}

\begin{proof}
Note that $Z_0=\ROWS(A)$ and that $Z_{m}=\ROWS(A\diag(v_{m}))$ so 
\begin{equation*}
Z_{m}=\ROWS(\diag(u_{m})A\diag(v_{m}))=\ROWS(W_m)\\
\end{equation*}
a similar statement can be proved for $W_m$ to show that $W_m=\COLS(Z_{m-1})$.
\end{proof}

Now consider the following iteration which is a standard Sinkhorn iteration 
with a deformation of using an increasing sequence $p_m$ which goes to infinity. 
\begin{eqnarray*}
&&v_0=\unit\\
&&u_{m+1}=\I(A^{(p_{m+1})}v_m)\enspace;\\
&&v_{m+1}=\I(A^{(p_{m+1})T}u_{m+1})\enspace.
\end{eqnarray*}
Analogous to the standard Sinkhorn iteration, let 
$W_{m+1}$ and $Z_{m}$ respectively, be column scaled and row scaled matrices defined as the following:
\begin{eqnarray}
&&W_{m}=\diag(u_{m})A^{(p_{m})}\diag(v_{m})\nonumber \\
\label{eq:zdefinition}
&&Z_{m}=\diag(u_{m+1})A^{(p_{m+1})}\diag(v_m)
\end{eqnarray}
\begin{proposition}
\label{rowcol_scal_prp}
For a diagonal matrix $D$, real matrices $B,C$ and the matrices $W_{m},Z_{m}$ in the iteration, the following properties hold. 
\begin{enumerate}
\item $\ROWS(C \circ (DB))=\ROWS(C \circ B)$ where $ \circ $ indicates the Hadamard product
\item $W_{m}=\COLS(Z_{m-1})$
\item $Z_{m}=\ROWS(W_{m} \circ A^{(p_{m+1}-p_m)})$
\end{enumerate}
\end{proposition}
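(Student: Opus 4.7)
All three assertions are straightforward identities about entrywise operations, so the proof will be a direct entry-by-entry computation; the only real work is careful bookkeeping.

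For (1), write $D=\diag(d_1,\dots,d_n)$. Since $(DB)_{ij}=d_i B_{ij}$, the Hadamard product satisfies
\[
(C\circ(DB))_{ij}=d_i C_{ij}B_{ij}=d_i\,(C\circ B)_{ij} \enspace ,
\]
so $C\circ(DB)=D(C\circ B)$. Row scaling divides each row by its row sum, and in row $i$ the common factor $d_i$ cancels between numerator and denominator, yielding $\ROWS(C\circ(DB))=\ROWS(C\circ B)$.

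For (2), I would use the definitions of the iteration directly. By construction
\[
V_{m+1,j}=\Bigl(\sum_i A^{(p_{m+1})}_{ij}U_{m+1,i}\Bigr)^{-1} \enspace ,
\]
so $W_{m+1,ij}=U_{m+1,i}A^{(p_{m+1})}_{ij}V_{m+1,j}=U_{m+1,i}A^{(p_{m+1})}_{ij}/\sum_k U_{m+1,k}A^{(p_{m+1})}_{kj}$. On the other hand, the $j$th column sum of $Z_m=\diag(U_{m+1})A^{(p_{m+1})}\diag(V_m)$ is $V_{m,j}\sum_i U_{m+1,i}A^{(p_{m+1})}_{ij}$; dividing each entry of the $j$th column by this quantity cancels the factor $V_{m,j}$ and produces exactly $W_{m+1,ij}$, giving $W_{m+1}=\COLS(Z_m)$, i.e.\ $W_m=\COLS(Z_{m-1})$ after shifting the index.

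For (3), the key step is to fold the Hadamard factor $A^{(p_{m+1}-p_m)}$ into the power of $A$ that appears inside $W_m$. Since $W_m=\diag(U_m)A^{(p_m)}\diag(V_m)$, entrywise multiplication gives
\[
(W_m\circ A^{(p_{m+1}-p_m)})_{ij}=U_{m,i}\,A^{(p_{m+1})}_{ij}\,V_{m,j} \enspace ,
\]
that is, $W_m\circ A^{(p_{m+1}-p_m)}=\diag(U_m)A^{(p_{m+1})}\diag(V_m)$. Applying (1) (or observing directly that the left diagonal factor is killed by row normalization) reduces the computation to $\ROWS(A^{(p_{m+1})}\diag(V_m))$. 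The $i$th row sum of this matrix is $\sum_j A^{(p_{m+1})}_{ij}V_{m,j}=1/U_{m+1,i}$ by the Sinkhorn update, so dividing $U_{m,i}A^{(p_{m+1})}_{ij}V_{m,j}$ by the row sum $U_{m,i}/U_{m+1,i}$ yields $U_{m+1,i}A^{(p_{m+1})}_{ij}V_{m,j}=Z_{m,ij}$, which is exactly the required identity.

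There is no genuine obstacle: everything reduces to (i) observing that left multiplication by a diagonal is invisible to $\ROWS$ and right multiplication by a diagonal is invisible to $\COLS$, and (ii) using the defining Sinkhorn-type updates of $U_{m+1}$ and $V_{m+1}$ to identify the row and column sums that appear. The one place where one must be a bit careful is checking that the exponents in the Hadamard factor combine as $p_m+(p_{m+1}-p_m)=p_{m+1}$, which is what makes (3) produce $Z_m$ rather than an intermediate object.
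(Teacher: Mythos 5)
Your proof is correct and follows essentially the same route as the paper: the paper proves only item (3), by splitting $A^{(p_{m+1})}=A^{(p_m)}\circ A^{(p_{m+1}-p_m)}$ and using the fact that a left diagonal factor is invisible to $\ROWS$, which is exactly your argument read in the opposite direction (you start from $W_m\circ A^{(p_{m+1}-p_m)}$ and normalize, the paper starts from $Z_m=\ROWS(A^{(p_{m+1})}\diag(V_m))$ and rewrites). Your explicit entrywise verifications of (1) and (2) simply fill in what the paper dismisses as straightforward.
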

\begin{proof}
We only prove the last one since others are straightforward. 
\begin{eqnarray*}
Z_{m}
&&=\ROWS(A^{(p_{m+1})}\diag(v_{m}))\\
&&=\ROWS(A^{(p_m)}\diag(v_{m}) \circ A^{(p_{m+1}-p_m)} )\\
&&=\ROWS(( \diag(u_{m})A^{(p_m)}\diag(v_{m})) \circ A^{(p_{m+1}-p_m)} )\\
&&=\ROWS(W_{m} \circ A^{(p_{m+1}-p_m)} )
\end{eqnarray*}
\end{proof}

So we define \textit{deformed Sinkhorn iteration} as the following
\begin{eqnarray}
&&Z_0=\ROWS(A^{(p_1)})\enspace; \nonumber\\ 
\label{colscale}
&&W_m=\COLS(Z_{m-1}), \quad c_m=({Z_{m-1}}^T)\unit \enspace;\\
\label{rowscale}
&&Z_m=\ROWS(W_m  \circ  A^{(p_{m+1}-p_m)}),\quad r_m=(W_m  \circ  A^{(p_{m+1}-p_m)})\unit \enspace.
\end{eqnarray}
Here, $r_m,c_m$ respectively denote the vectors of row sums and column sums. 

In the following two sections, we will prove that 
the deformed Sinkhorn iteration will converge to a bistochastic matrix 
where all the nonzero entries belong to an optimal permutation of the original matrix.

\subsection{Convergence to optimal assignment} 
\label{sec:defopt}
For an input matrix, $A=(a_{ij})$, assume that the deformed Sinkhorn iteration 
converges to a bistochastic matrix.
 Define the weight of a permutation, $\sigma$, with respect to $A$, to be $\omega_\sigma(A)=\prod_ia_{i\sigma(i)}$.
If $A$ has a support, it should have at least
one optimal permutation as $\sigma_{opt}$ with nonzero weight.
It is evident that $\sigma_{opt}$ is the optimal permutation for all 
the matrices $W_m$ and $Z_m$ produced by each deformed Sinkhorn iteration.
Observe that for all permutations $\sigma$ and $\pi$, 
the ratio $\frac{\omega_\sigma(A)}{\omega_\pi(A)}$ is invariant if we multiply the matrix $A$ by diagonal matrices. So it follows from the Equation~\ref{eq:zdefinition} that
\[
\gamma_m=\frac{\omega_\sigma(Z_m)}{\omega_\pi(Z_m)}=\gamma_{m-1}(\frac{\omega_\sigma(A)}{\omega_\pi(A)})^{p_{m+1}-p_m}=(\frac{\omega_\sigma(A)}{\omega_\pi(A)})^{p_{m+1}}
\]
Thus, for all non optimal permutations such as $\sigma$,
$\frac{\omega_\sigma(Z_m)}{\omega_{\sigma_{opt}}(Z_m)}$ 
will converge to zero when $p_m\rightarrow\infty$. 
Since in each iteration 
the weight of optimal permutation, $\omega_{\sigma_{opt}}(Z_m)$,
is bounded above by $1$,
the weight of all non optimal permutations will converge to zero which yields the following lemma.
\begin{lemma}
Assume that the deformed Sinkhorn iteration converges to a matrix, $Z$, 
produced by the deformed Sinkhorn iteration when $p_m\rightarrow\infty$. If the original 
matrix $A$ has a support, then 
all the permutations of $Z$ have zero weight, except the optimal 
permutations of the original matrix $A$.
\end{lemma}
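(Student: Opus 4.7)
The plan is to combine two observations. First, for any permutations $\sigma, \pi$ and any diagonal matrices $D, D'$ with positive diagonal entries, the weight ratio satisfies $\omega_\sigma(DBD')/\omega_\pi(DBD') = \omega_\sigma(B)/\omega_\pi(B)$, because the diagonal factors $D_{ii}$ and $D'_{jj}$ that accumulate in $\prod_i (DBD')_{i\tau(i)}$ are independent of $\tau$ and therefore cancel. Second, the matrix $Z_m$ produced at step $m$ of the deformed Sinkhorn iteration is itself obtainable as a left-right diagonal scaling of $A^{(p_{m+1})}$. Once both observations are in place, the lemma follows quickly.

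For the second observation, I would argue by induction on $m$, using items 2 and 3 of Proposition~\ref{rowcol_scal_prp} together with the elementary identities $(DB)\circ C = D(B\circ C)$ and $(BD)\circ C = (B\circ C)D$ valid for any diagonal $D$. Starting from $Z_{m-1} = D_{m-1} A^{(p_m)} D'_{m-1}$, the column scaling that yields $W_m$ only modifies the right diagonal factor, and the subsequent Hadamard multiplication by $A^{(p_{m+1}-p_m)}$ collapses through the left diagonal factor to give $W_m \circ A^{(p_{m+1}-p_m)} = D_{m-1} A^{(p_{m+1})} \tilde{D}$ for some positive diagonal $\tilde D$; a final row scaling yields $Z_m = D_m A^{(p_{m+1})} D'_m$. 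The base case is immediate from $W_0 = \COLS(A^{(p_0)})$.

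Combining the two observations, for any non-optimal $\sigma$ and any optimal permutation $\sigma_{opt}$ of $A$,
\[
\frac{\omega_\sigma(Z_m)}{\omega_{\sigma_{opt}}(Z_m)} = \frac{\omega_\sigma(A^{(p_{m+1})})}{\omega_{\sigma_{opt}}(A^{(p_{m+1})})} = \left(\frac{\omega_\sigma(A)}{\omega_{\sigma_{opt}}(A)}\right)^{p_{m+1}}.
\]
Since $Z_m$ is row-stochastic, its entries lie in $[0,1]$, so $\omega_{\sigma_{opt}}(Z_m) \le 1$, and since $A$ has support we have $\omega_{\sigma_{opt}}(A) > 0$ and $\omega_\sigma(A)/\omega_{\sigma_{opt}}(A) < 1$ by non-optimality. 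Hence the right-hand side tends to zero as $p_{m+1} \to \infty$, forcing $\omega_\sigma(Z_m) \to 0$, and by continuity of $\omega_\sigma$ in the matrix entries, $\omega_\sigma(Z) = 0$.

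The only genuinely delicate point is the inductive identification of $Z_m$ with a diagonal scaling of $A^{(p_{m+1})}$, because the deformation parameter changes at every step and one has to track carefully how the Hadamard factor $A^{(p_{m+1}-p_m)}$ merges with the previously accumulated diagonal scalings and with the new power of $A$. Everything else, including the exponential decay and the limiting argument, is a direct application of the projective invariance of weight ratios under diagonal scaling. I would therefore allocate most of the care in a careful write-up to making that induction watertight.
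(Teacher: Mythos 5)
Your proof is correct and follows essentially the same route as the paper: invariance of weight ratios under two-sided diagonal scaling, the identity $\omega_\sigma(Z_m)/\omega_{\sigma_{opt}}(Z_m)=\bigl(\omega_\sigma(A)/\omega_{\sigma_{opt}}(A)\bigr)^{p_{m+1}}$, and the bound $\omega_{\sigma_{opt}}(Z_m)\leq 1$ coming from row-stochasticity. The inductive identification you flag as the delicate point is actually immediate in the paper, since $Z_m$ is defined outright as $\operatorname{diag}(U_{m+1})A^{(p_{m+1})}\operatorname{diag}(V_m)$ in Equation~\eqref{eq:zdefinition}, so it is a diagonal scaling of $A^{(p_{m+1})}$ by construction.
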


Due to the theorem of Birkhoff-von Neumann, a square bistochastic matrix in $\R$ 
is a convex combination of permutation matrices. Hence, all the nonzero 
entries of a bistochastic matrix belong to a permutation with nonzero weight. This statement 
together with the previous lemma yield the following theorem.
\begin{theorem}
\label{optimalassign}
For a non-negative matrix $A$ which has a support, as $p_m\rightarrow\infty$, 
if the \textit{deformed Sinkhorn iteration} converges to a matrix $X$, 
then all the nonzero entries of $X$ belong to an optimal permutation of the original matrix.
\end{theorem}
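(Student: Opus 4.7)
The proof plan relies almost entirely on two ingredients that the excerpt already puts in place: the preceding lemma (which controls which permutations can contribute positive weight to the limit) and the Birkhoff--von Neumann theorem (which says every bistochastic matrix is a convex combination of permutation matrices). The theorem is essentially the combination of these two facts, with one small preliminary step to identify the limit as bistochastic.

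First, I would argue that the limit matrix $X$ is bistochastic. By construction in equation~\eqref{colscale}, every $W_m$ is column-stochastic, and by equation~\eqref{rowscale}, every $Z_m$ is row-stochastic. Under the hypothesis that the deformed Sinkhorn iteration converges, both subsequences $W_m$ and $Z_m$ must have the same limit $X$ (since $\|W_m - Z_m\| \to 0$ is implicit in ``convergence of the iteration''; alternatively one interprets convergence componentwise on both families). Because the set of column-stochastic matrices and the set of row-stochastic matrices are both closed, $X$ lies in their intersection, hence $X \in \bisto_n$.

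Next, by the Birkhoff--von Neumann theorem, there exist nonnegative coefficients $\lambda_\sigma \geq 0$ with $\sum_{\sigma \in \mathfrak{S}_n} \lambda_\sigma = 1$ such that
\[
X = \sum_{\sigma \in \mathfrak{S}_n} \lambda_\sigma P_\sigma,
\]
where $P_\sigma$ is the permutation matrix of $\sigma$. Suppose $x_{ij} > 0$. Then there must exist some permutation $\sigma$ with $\lambda_\sigma > 0$ and $\sigma(i) = j$. For such a $\sigma$, every term in the product $\omega_\sigma(X) = \prod_{k} x_{k\sigma(k)}$ is bounded below by $\lambda_\sigma > 0$, so $\omega_\sigma(X) > 0$.

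Finally, I would invoke the preceding lemma: applied to the limit $X$ (obtained as $p_m \to \infty$), it says that every permutation with nonzero weight in $X$ is an optimal permutation of the original matrix $A$. Combined with the previous step, this forces $\sigma$ to be an optimal permutation of $A$, so the entry $(i,j) = (i,\sigma(i))$ belongs to an optimal permutation, which is what the theorem claims. The only subtlety worth flagging is the point about identifying the common limit of $W_m$ and $Z_m$; everything else is a direct assembly of already-established pieces.
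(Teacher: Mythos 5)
Your proposal is correct and follows essentially the same route as the paper: the paper's proof is exactly the combination of the Birkhoff--von Neumann decomposition (showing every nonzero entry of the bistochastic limit lies on a permutation of positive weight) with the preceding lemma (showing only optimal permutations of $A$ can have positive weight in the limit). Your preliminary step identifying the limit as bistochastic is a harmless elaboration of a hypothesis the paper simply assumes at the start of the subsection.
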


\subsection{Convergence to bistochastic matrix for positive matrices}
\label{sec:deformed_conv}
Recall that the rate of convergence of the classical Sinkhorn iteration is bounded 
above by $\kappa(A)^2$ where
$\kappa(A)=\frac{\theta(A)^{1/2}-1}{\theta(A)^{1/2}+1}$.
The following theorem presents the main result of this section:
\begin{theorem}
\label{logconvergencepositive}
Let $A$ be a positive matrix. If $p_m=a\log (m+1)$ where $0<a\log\theta<2$, then the 
deformed Sinkhorn iteration will converge to a bistochastic matrix 
and subsequently to a solution of optimal assignment of the original matrix $A$.
\end{theorem}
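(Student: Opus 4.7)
The plan is to analyze the deformed Sinkhorn iteration through Birkhoff's contraction theorem in Hilbert's projective metric $d_H$, as in the classical Franklin--Lorenz bound recalled above. Since $A$ is positive, $A^{(p)}$ is positive for every $p > 0$, and a direct computation gives $\theta(A^{(p)}) = \theta(A)^p$, whence $\kappa(A^{(p)}) = \tanh(p\log\theta/4)$. For each $p$, Sinkhorn's theorem combined with Proposition~\ref{prop-borweinlewisnussbaum} supplies a pair $(U^\star(p), V^\star(p))$, unique up to a common rescaling, for which $\operatorname{diag}(U^\star(p))A^{(p)}\operatorname{diag}(V^\star(p))$ is bistochastic; I pin the ambiguity down by a normalization such as $\prod_i U^\star_i(p) = 1$.

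First I would track the Hilbert distance $\epsilon_m := d_H\bigl(V_m, V^\star(p_{m+1})\bigr)$. The transition from $W_m$ to $W_{m+1}$ (equivalently $V_m$ to $V_{m+1}$) is exactly one full Sinkhorn cycle on the matrix $A^{(p_{m+1})}$, so Birkhoff's theorem contracts $d_H$ toward the balancing scaling $V^\star(p_{m+1})$ by factor $\kappa_{m+1}^2 := \kappa(A^{(p_{m+1})})^2$. Applying the triangle inequality produces the recurrence
\[
\epsilon_{m+1} \leq \kappa_{m+1}^2 \,\epsilon_m + \eta_m, \qquad \eta_m := d_H\bigl(V^\star(p_{m+1}), V^\star(p_{m+2})\bigr).
\]

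Next I would bound the fixed-point drift $\eta_m$. The characterization in Proposition~\ref{prop-borweinlewisnussbaum} shows that $p\mapsto V^\star(p)$ is smooth on $(0,\infty)$; the quantitative estimate comes from the observation that passing from $A^{(p)}$ to $A^{(p+\Delta)}$ amounts to Hadamard-multiplying by $A^{(\Delta)}$, which has projective diameter $\Delta\log\theta$. A compactness argument combined with the implicit function theorem applied to the bistochasticity equations then yields $\eta_m \leq C\,(p_{m+2}-p_{m+1})$ for some constant $C = C(A,n)$, and with $p_m=a\log(m+1)$ this gives $\eta_m = O(1/m)$.

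Finally I would solve the recurrence. Writing $\alpha_m := 1-\kappa_m^2 = \operatorname{sech}^2(p_m\log\theta/4)$, we have $\alpha_m \sim 4(m+1)^{-a\log\theta/2}$, so the hypothesis $a\log\theta < 2$ simultaneously ensures $\sum_m \alpha_m = \infty$ (so $\prod_m \kappa_m^2 = 0$, wiping out the initial error) and $\eta_m/\alpha_m = O\bigl(m^{a\log\theta/2 - 1}\bigr) \to 0$. A standard perturbed-contraction lemma -- if $\epsilon_{m+1} \leq (1-\alpha_m)\epsilon_m + \eta_m$ with $\sum\alpha_m=\infty$ and $\eta_m/\alpha_m\to 0$, then $\epsilon_m\to 0$ -- closes the argument, so $V_m$ tracks $V^\star(p_m)$, which itself converges by Theorem~\ref{thm:prop-converge1}. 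The matrices $W_m$ and $Z_m$ therefore converge to the same bistochastic limit, and Theorem~\ref{optimalassign} identifies this limit as a convex combination of optimal permutation matrices. The main obstacle is the uniform-in-$p$ Lipschitz estimate on $p\mapsto V^\star(p)$: the constant $C$ must not blow up as $p\to\infty$, and it is exactly the interplay between this Lipschitz rate (order $\Delta p$) and the decay of $\alpha_m$ (order $m^{-a\log\theta/2}$) that makes the threshold $a\log\theta = 2$ natural.
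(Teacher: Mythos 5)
Your overall architecture -- a perturbed-contraction recurrence in Hilbert's projective metric, with contraction factor $\kappa(A^{(p_m)})^2$ and a perturbation term of order $p_{m+1}-p_m=O(1/m)$, closed by the standard lemma ``$\epsilon_{m+1}\le(1-\alpha_m)\epsilon_m+\eta_m$, $\sum\alpha_m=\infty$, $\eta_m/\alpha_m\to0\Rightarrow\epsilon_m\to0$'' -- is exactly the shape of the paper's argument, and your computation of the threshold $a\log\theta<2$ from $\alpha_m\sim 4(m+1)^{-a\log\theta/2}$ matches the paper's. The difference, and the gap, is in \emph{which quantity you contract}. You track $\epsilon_m=d\bigl(V_m,V^\star(p_{m+1})\bigr)$, the distance to the exact balancing vector of the current deformed matrix, so your perturbation term is the fixed-point drift $\eta_m=d\bigl(V^\star(p_{m+1}),V^\star(p_{m+2})\bigr)$, and everything hinges on the uniform-in-$p$ Lipschitz bound $\eta_m\le C\,(p_{m+2}-p_{m+1})$. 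You flag this yourself as the main obstacle, but the tools you invoke do not deliver it: the implicit function theorem gives smoothness of $p\mapsto V^\star(p)$ at each fixed $p$, while the issue is precisely whether the Lipschitz constant stays bounded as $p\to\infty$, where the balancing problem degenerates (the natural contraction-based estimate of the drift of a fixed point is $d(T_{p'}V^\star(p),V^\star(p))/(1-\kappa(A^{(p')})^2)$, whose denominator is exactly the vanishing $\alpha_m$, so it does \emph{not} give $O(\Delta p)$; establishing a genuine $O(\Delta p)$ bound seems to require something like the Puiseux-series expansion of $V^\star(p)$ from Section~3, which in the paper needs $\log a_{ij}\in\Q$ and is not invoked in this theorem). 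As written, the middle step of your proof is therefore an unproved claim on which the whole estimate rests.

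The paper sidesteps this entirely by contracting a different quantity: it tracks $d(r_m,\unit)$ and $d(c_m,\unit)$, the Hilbert distances of the row- and column-sum vectors to $\unit$. For these, the perturbation caused by passing from $p_m$ to $p_{m+1}$ is the Hadamard multiplication by $A^{(p_{m+1}-p_m)}$, and the elementary inequality $d((A\circ B)x,x')\le\log\frac{\max(B)}{\min(B)}+d(Ax,x')$ bounds the perturbation by $(p_{m+1}-p_m)\log M$ with $M=\max(A)/\min(A)$ -- no information about the limiting scaling vectors is needed, and no rationality hypothesis enters. This yields the recurrence $d(c_m,\unit)\le 2(p_m-p_{m-1})\log M+\kappa^2(A^{(p_{m-1})})\,d(c_{m-1},\unit)$, after which the same perturbed-contraction analysis you describe (the paper carries it out by comparing $\beta_m$ to the moving fixed points $l_m$ of the affine maps) finishes the proof. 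If you replace your fixed-point-drift step by this direct bound on the row/column sums, your argument becomes essentially the paper's.
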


The proof relies on the next lemmas.
For a matrix $A$, $\theta(A)=\theta(A^T)$,
and for two diagonally equivalent matrices such as $A$ and $B$, $\theta(A)=\theta(B)$. 
\begin{lemma}
For positive matrices $A$ and $B$, diagonal matrix $D$ and the Hilbert projective metric
$d(x,x')$, the following properties hold.
\begin{enumerate}
\item $d(Ax,Ax')\leq \kappa(A)d(x,x')$
\item $d((A\circ B)x,x')\leq \log{\frac{\max(B)}{\min(B)}}+ d(Ax,x')$
\item $\kappa(AD \circ B)=\kappa(A \circ BD)=\kappa((A \circ B)D)=\kappa(D(A \circ B))=\kappa(A \circ B)$
\end{enumerate}
\end{lemma}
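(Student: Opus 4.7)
The plan is to treat the three items in the order given, leveraging only the definition of Hilbert's projective metric $d(x,x')=\log\max_{i,j}(x_ix'_j)/(x'_ix_j)$ together with the cross-ratio characterization $\theta(M)=\max_{i,j,r,l}(m_{ir}m_{jl})/(m_{jr}m_{il})$ of the projective diameter of a positive matrix $M$. The three parts are of very different depths: the first is Birkhoff's contraction theorem (already alluded to earlier in the paper and attributable to Bapat~\S\,6), whereas the second and third are elementary consequences of the entrywise definition of the Hadamard product.

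For item~1, I would simply invoke Birkhoff's contraction theorem as already recalled in the paragraph preceding the proposition of Franklin and Lorenz. If an in-line sketch is desired, one writes $(Ax)_i/(Ax')_i=\sum_ka_{ik}x_k/\sum_ka_{ik}x'_k$ and bounds the ratio of two such convex combinations in terms of $\max_k(x_k/x'_k)/\min_k(x_k/x'_k)=e^{d(x,x')}$ and the projective diameter $\theta(A)$, obtaining the contraction factor $(\theta(A)^{1/2}-1)/(\theta(A)^{1/2}+1)=\kappa(A)$.

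For item~2, the key pointwise observation is that $\min(B)\,a_{ij}\le (A\circ B)_{ij}\le\max(B)\,a_{ij}$, so for any positive vector $x$ one has $\min(B)\,(Ax)_i\le ((A\circ B)x)_i\le\max(B)\,(Ax)_i$. Feeding these inequalities into the definition of the projective metric gives
\[
d\bigl((A\circ B)x,\;Ax\bigr)\;\le\;\log\frac{\max(B)}{\min(B)},
\]
and the triangle inequality for $d$ then yields
\[
d\bigl((A\circ B)x,\,x'\bigr)\;\le\;d\bigl((A\circ B)x,Ax\bigr)+d(Ax,x')\;\le\;\log\frac{\max(B)}{\min(B)}+d(Ax,x'),
\]
which is the desired bound.

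For item~3, the heart of the matter is the trivial algebraic identity, valid whenever $D$ is diagonal,
\[
AD\circ B \;=\; A\circ BD \;=\; (A\circ B)D,
\]
because all three matrices have $(i,j)$-entry $a_{ij}b_{ij}d_j$; the mirror identity $DA\circ B=A\circ DB=D(A\circ B)$ follows identically by multiplying on the left. It remains to show $\theta(MD)=\theta(DM)=\theta(M)$ for any positive diagonal $D$, and this is immediate from the cross-ratio formula, since the factors $d_r,d_l$ (resp.\ $d_i,d_j$) cancel in $(m_{ir}d_r m_{jl}d_l)/(m_{jr}d_r m_{il}d_l)$. Hence $\kappa$ is invariant as well, completing the chain of equalities. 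I expect no serious obstacle: the only point requiring care is not confusing left- and right-diagonal multiplication when verifying the Hadamard identities, but the entrywise check is transparent.
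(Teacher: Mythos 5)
Your proof is correct, and it supplies exactly the details that the paper omits (its entire proof of this lemma is the sentence ``The proof is straightforward''): Birkhoff's contraction theorem for item~1, the entrywise sandwich $\min(B)\,a_{ij}\le (A\circ B)_{ij}\le\max(B)\,a_{ij}$ plus the triangle inequality for $d$ in item~2, and the Hadamard/diagonal identities together with the cancellation of the diagonal factors in the cross-ratio $\theta$ for item~3. This is evidently the argument the authors had in mind, so there is nothing to compare beyond noting that your write-up is the natural completion of theirs.
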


\begin{proof}
The proof is straightforward.
\end{proof}
\begin{corollary}
$\kappa(A)$ is invariant under $\ROWS$ or $\COLS$ operators.
\end{corollary}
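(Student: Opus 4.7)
The plan is to reduce the statement to the diagonal-equivalence invariance of $\kappa$ that has already been established in the preceding lemma. I would first observe that the $\ROWS$ and $\COLS$ operators are nothing but left and right multiplications by positive diagonal matrices: if $r(A)$ denotes the vector of row sums of $A$ and $c(A)$ the vector of column sums, then
\[
\ROWS(A)=D_r^{-1}A,\qquad \COLS(A)=A D_c^{-1},
\]
where $D_r=\operatorname{diag}(r(A))$ and $D_c=\operatorname{diag}(c(A))$. Since $A$ is assumed positive (so that $\kappa(A)$ and $\theta(A)$ are defined in the usual way), both $D_r$ and $D_c$ have strictly positive diagonal entries, hence are admissible diagonal scalings.

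Next, I would invoke part (3) of the preceding lemma with $B$ taken to be the all-ones matrix $J$, so that $A\circ B=A\circ J=A$. The lemma then yields
\[
\kappa(AD)=\kappa((A\circ J)D)=\kappa(A\circ J)=\kappa(A)\qquad\text{and}\qquad\kappa(DA)=\kappa(D(A\circ J))=\kappa(A\circ J)=\kappa(A)
\]
for every positive diagonal matrix $D$. Specializing $D=D_c^{-1}$ in the first identity gives $\kappa(\COLS(A))=\kappa(A)$, and specializing $D=D_r^{-1}$ in the second gives $\kappa(\ROWS(A))=\kappa(A)$, which is exactly the claim. Alternatively, one can argue even more directly via the explicit formula $\kappa(A)=(\theta(A)^{1/2}-1)/(\theta(A)^{1/2}+1)$: since $\theta$ is unchanged when one replaces the entries $a_{ij}$ by $\alpha_i\beta_j a_{ij}$ (the factors $\alpha_i,\alpha_j,\beta_r,\beta_l$ cancel in the ratio $a_{ir}a_{jl}/(a_{jr}a_{il})$), $\theta$ is invariant under diagonal equivalence, and thus so is $\kappa$.

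There is essentially no obstacle here; the statement is a one-line consequence of the lemma. The only point requiring a moment of care is to make sure the identification $\ROWS(A)=D_r^{-1}A$ (respectively $\COLS(A)=AD_c^{-1}$) is valid, which in turn requires that every row sum (respectively column sum) of $A$ be strictly positive — but this is automatic because the corollary is stated in the context of positive matrices considered in the surrounding lemma.
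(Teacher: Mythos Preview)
Your proposal is correct and matches the paper's intent: the corollary is stated without proof, implicitly as an immediate consequence of the lemma (and of the diagonal-equivalence invariance of $\theta$ noted just before it), and your argument makes this explicit by recognizing $\ROWS$ and $\COLS$ as diagonal scalings and invoking part~(3) with $B=J$.
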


\begin{lemma}
Let $W_m$ and $Z_m$ be the matrices in Equations~(\ref{colscale},\ref{rowscale}) at iteration $m$.
The following properties hold.
\begin{enumerate}
\item $\kappa(Z_m)=\kappa(A^{(p_{m+1})})$
\item $\kappa(W_m)=\kappa(A^{(p_m)})$
\end{enumerate}
\end{lemma}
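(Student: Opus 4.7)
The plan is to show by induction on $m$ that $W_m$ and $Z_m$ both have the form $D_L A^{(p)} D_R$ for a suitable exponent $p$ (namely $p_m$ or $p_{m+1}$) and positive diagonal matrices $D_L, D_R$. Once this structure is established, the conclusion $\kappa(Z_m) = \kappa(A^{(p_{m+1})})$ and $\kappa(W_m) = \kappa(A^{(p_m)})$ follows immediately from property~3 of the preceding lemma (equivalently, from the corollary saying that $\kappa$ is invariant under $\COLS$ and $\ROWS$).

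The key algebraic identity I will rely on is that for any positive diagonal matrices $D_L, D_R$ and any matrices $M, N$,
\[
(D_L M D_R) \circ N = D_L (M\circ N) D_R \enspace ,
\]
which is immediate from looking at entries. Combined with the fact that Hadamard multiplication of powers adds exponents, $A^{(p)} \circ A^{(q)} = A^{(p+q)}$, this lets me propagate the ``diagonally equivalent to a Hadamard power'' structure through one step of the deformed Sinkhorn iteration.

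Concretely, the induction proceeds as follows. For the base case, $W_0 = \COLS(A^{(p_0)}) = A^{(p_0)} D$ for some positive diagonal $D$, so $\kappa(W_0) = \kappa(A^{(p_0)})$ by property~3. For the inductive step, assume $W_m = D_L A^{(p_m)} D_R$. Then by the identity above,
\[
W_m \circ A^{(p_{m+1}-p_m)} \;=\; D_L\bigl(A^{(p_m)}\circ A^{(p_{m+1}-p_m)}\bigr) D_R \;=\; D_L A^{(p_{m+1})} D_R \enspace ,
\]
and row scaling only multiplies on the left by another positive diagonal, so $Z_m = \tilde D_L A^{(p_{m+1})} D_R$. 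Applying property~3 gives $\kappa(Z_m) = \kappa(A^{(p_{m+1})})$. Then $W_{m+1} = \COLS(Z_m)$ multiplies on the right by yet another positive diagonal, giving $W_{m+1} = \tilde D_L A^{(p_{m+1})} \tilde D_R$, whence $\kappa(W_{m+1}) = \kappa(A^{(p_{m+1})})$, completing the induction.

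There is no real obstacle here; the proof is essentially bookkeeping. The only thing to verify carefully is the Hadamard identity $(D_L M D_R)\circ N = D_L(M\circ N)D_R$ and the exponent-addition identity for Hadamard powers, both of which are one-line entrywise computations. The result is then an immediate corollary of the preceding lemma.
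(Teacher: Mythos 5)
Your proof is correct and is exactly the induction the paper has in mind: the paper's proof of this lemma consists only of the sentence ``The proof is straightforward by using the induction on $m$,'' and your argument supplies precisely that induction, propagating the form $D_L A^{(p)} D_R$ through one deformed Sinkhorn step via the identities $(D_L M D_R)\circ N = D_L(M\circ N)D_R$ and $A^{(p)}\circ A^{(q)}=A^{(p+q)}$, then invoking the diagonal-invariance of $\kappa$ from the preceding lemma. No gaps; this matches the paper's approach.
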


\begin{proof}
The proof is straightforward.%
\end{proof}

The next lemma is analogous to Lemma~2 in~\cite{Frank89},
where the classical Sinkhorn iteration is considered.
\begin{lemma}
\label{convergencespeed}
Let $r_m,c_m$ be the vectors defined in Equation~(\ref{colscale},\ref{rowscale}) at iteration $m$ and $M=\frac{\max(A)}{\min(A)}$ 
then,

\begin{eqnarray*}
\label{mainineq1}
d(r_m,\unit)
&\leq & 
(p_{m+1}-p_{m})\log{M}+(p_{m}-p_{m-1})\kappa(A^{(p_m)})\log{M} \nonumber \\
&& +\kappa(A^{(p_m)})\kappa(A^{(p_{m-1})})d(r_{m-1},\unit)\\ 
\label{mainineq2}
d(c_m,\unit)
&\leq&
(p_{m}-p_{m-1})\log M+ (p_{m}-p_{m-1})\kappa(A^{(p_{m-1})})\log M \nonumber \\
&& + \kappa^2(A^{(p_{m-1})})d(c_{m-1},\unit)
\end{eqnarray*}
\end{lemma}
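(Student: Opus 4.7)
The plan is to translate the two inequalities into estimates on the Sinkhorn vectors $U_m,V_m$ underlying the iteration, and then to combine Birkhoff's contraction theorem (established in the preceding lemma) with one elementary perturbation bound.

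First, by induction on $m$ I would verify the identity $W_m\circ A^{(p_{m+1}-p_m)}=\diag(U_m)A^{(p_{m+1})}\diag(V_m)$, where $U_{m+1}=\I(A^{(p_{m+1})}V_m)$ and $V_m=\I(A^{(p_m)T}U_m)$ are the vectors generated by the classical Sinkhorn recursion associated with $A^{(p_{m+1})}$. Componentwise this gives $r_{m,i}=U_{m,i}/U_{m+1,i}$ and $c_{m,i}=V_{m-1,i}/V_{m,i}$, so the definition of Hilbert's projective metric yields
\[
d(r_m,\unit)=d(U_m,U_{m+1}),\qquad d(c_m,\unit)=d(V_{m-1},V_m).
\]

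The only new ingredient I need is the perturbation bound $d(A^{(p)}x,A^{(q)}x)\leq |p-q|\log M$, valid for every positive vector $x$. This follows from the observation that the ratio $(A^{(q)}x)_i/(A^{(p)}x)_i$ is a weighted average of the entries $A_{ij}^{q-p}$ with positive weights $A_{ij}^{p}x_j$, hence lies in $[\min_j A_{ij}^{q-p},\max_j A_{ij}^{q-p}]$; taking the ratio of two such quantities at distinct indices $i,i'$ gives the claim.

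Using that $\I$ is an isometry of Hilbert's metric and that $\kappa(A^T)=\kappa(A)$, I would then write $d(U_m,U_{m+1})=d(A^{(p_m)}V_{m-1},A^{(p_{m+1})}V_m)$ and insert the intermediate point $A^{(p_m)}V_m$: Birkhoff gives $d(A^{(p_m)}V_{m-1},A^{(p_m)}V_m)\leq \kappa(A^{(p_m)})d(V_{m-1},V_m)$ while the perturbation bound gives $d(A^{(p_m)}V_m,A^{(p_{m+1})}V_m)\leq (p_{m+1}-p_m)\log M$. Expanding the residual $d(V_{m-1},V_m)=d(A^{(p_{m-1})T}U_{m-1},A^{(p_m)T}U_m)$ by the same device, with intermediate point $A^{(p_{m-1})T}U_m$, produces $(p_m-p_{m-1})\log M+\kappa(A^{(p_{m-1})})d(U_{m-1},U_m)$, and substituting $d(U_{m-1},U_m)=d(r_{m-1},\unit)$ yields the first inequality. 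The second inequality follows by the symmetric chain started from $d(c_m,\unit)=d(V_{m-1},V_m)$ and expanded twice, through intermediate points $A^{(p_{m-1})T}U_m$ and then $A^{(p_{m-1})}V_{m-1}$; both expansions pull out a factor $\kappa(A^{(p_{m-1})})$, producing $\kappa^2(A^{(p_{m-1})})$ in front of $d(c_{m-1},\unit)$.

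I do not anticipate a serious obstacle. Once the translation $d(r_m,\unit)=d(U_m,U_{m+1})$ and $d(c_m,\unit)=d(V_{m-1},V_m)$ is in place, the remainder is mechanical. The only care needed is to choose the correct intermediate point in each triangle-inequality step so that the $\kappa$-factors carry the desired exponent — in each case the lower of the two available exponents, which is also the one giving the sharper bound.
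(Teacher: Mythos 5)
Your proof is correct and follows essentially the same route as the paper: the identities $d(r_m,\unit)=d(U_m,U_{m+1})$ and $d(c_m,\unit)=d(V_{m-1},V_m)$ are just the vector-level restatement of the paper's manipulation of $W_m$ and $Z_{m-1}$, your perturbation bound $d(A^{(p)}x,A^{(q)}x)\leq|p-q|\log M$ is the paper's Hadamard-product property $d((A\circ B)x,x')\leq\log\frac{\max(B)}{\min(B)}+d(Ax,x')$, and the two chained contraction steps produce exactly the same coefficients. The only cosmetic difference is that you track the scaling vectors while the paper tracks the scaled matrices (using $\kappa(Z_{m-1})=\kappa(A^{(p_m)})$, $\kappa(W_{m-1})=\kappa(A^{(p_{m-1})})$ in place of your direct application of Birkhoff's theorem to $A^{(p_m)}$ and $A^{(p_{m-1})T}$).
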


\begin{proof}
We have,
\begin{eqnarray*}
r_m
&&=(W_m \circ A^{(p_{m+1}-p_m)})\unit=(Z_{m-1} \diag(\I(c_m)) \circ A^{(p_{m+1}-p_m)})\unit
\\&&=(Z_{m-1} \circ A^{(p_{m+1}-p_m)})\diag(\I(c_m))\unit=(Z_{m-1} \circ A^{(p_{m+1}-p_m)})(\I(c_m)),
\end{eqnarray*}
so 
\begin{eqnarray*}
d(r_m,\unit)
&&
=d((Z_{m-1} \circ A^{(p_{m+1}-p_m)})(\I(c_m)),Z_{m-1}\unit)\\
&&\leq (p_{m+1}-p_m)\log M+\kappa(Z_{m-1})d(c_m,\unit)\\
&&= (p_{m+1}-p_m)\log M+\kappa(A^{(p_m)})d(c_m,\unit).
\end{eqnarray*}
Also 
\begin{eqnarray*}
d(c_m,\unit)
&&=d(({W_{m-1}}^T \circ A^{(p_m-p_{m-1})T})(\I(r_{m-1})),{W_{m-1}}^T\unit)\\
&&\leq (p_m-p_{m-1})\log M+ \kappa({W_{m-1}}^T)d(\I(r_{m-1}),\unit)\\
&&= (p_m-p_{m-1})\log M+\kappa({W_{m-1}})d(r_{m-1},\unit)\\
&&= (p_m-p_{m-1})\log M+\kappa({A^{(p_{m-1})}})d(r_{m-1},\unit),
\end{eqnarray*}
then
\begin{eqnarray*}
d(r_m,\unit)
&\leq & 
(p_{m+1}-p_{m})\log{M}+(p_{m}-p_{m-1})\kappa(A^{(p_m)})\log{M} \nonumber \\
&& +\kappa(A^{(p_m)})\kappa(A^{(p_{m-1})})d(r_{m-1},\unit)
\end{eqnarray*}
The second statement is established in a similar way.
\end{proof}

\begin{lemma}
Assume that $p_m=a\log(m+1)$, where $0<a\log{\theta(A)}<2$. Then we have 
\[
\lim_{m\rightarrow\infty}{d(c_m,\unit)}=0\enspace.
\]
\end{lemma}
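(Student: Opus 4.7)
My plan is to turn the recursive bound from Lemma~\ref{convergencespeed} into an explicit summation estimate and show that under the logarithmic growth of $p_m$ every term goes to zero.

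First I would compute how the quantities $\kappa(A^{(p_m)})$ and $p_m-p_{m-1}$ behave as $m\to\infty$. Since $\theta$ is a ratio of products of $p$ entries over $p$ entries, $\theta(A^{(p)})=\theta(A)^p$, so
\[
\kappa(A^{(p_m)})=\frac{\theta(A)^{p_m/2}-1}{\theta(A)^{p_m/2}+1}, \qquad \theta(A)^{p_m/2}=(m+1)^\alpha,
\]
where $\alpha:=(a/2)\log\theta(A)\in(0,1)$ by hypothesis. Hence there is a constant $c_1>0$ such that for all sufficiently large $m$,
\[
\kappa(A^{(p_m)})^2 \leq 1-\frac{c_1}{m^\alpha}, \qquad p_m-p_{m-1}=a\log\bigl(1+1/m\bigr)\leq \frac{a}{m}.
\]
Substituting these into the second inequality of Lemma~\ref{convergencespeed} yields, for large $m$, a recurrence of the form
\[
d(c_m,\unit)\leq \frac{C_1}{m}+\Bigl(1-\frac{c_1}{m^\alpha}\Bigr)\,d(c_{m-1},\unit),
\]
with $C_1$ depending only on $a,M,\theta(A)$.

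Next I would unroll this recurrence. Writing $B_j:=1-c_1/j^\alpha$ and $A_k:=C_1/k$, iteration from some large $m_0$ gives
\[
d(c_m,\unit)\leq \sum_{k=m_0}^{m} A_k\!\!\prod_{j=k+1}^{m} B_j \;+\; d(c_{m_0-1},\unit)\prod_{j=m_0}^{m} B_j.
\]
Using $\log B_j\leq -c_1/j^\alpha$ and comparing with $\int x^{-\alpha}\,dx$, one gets
\[
\prod_{j=k+1}^{m} B_j \leq \exp\!\Bigl(-\frac{c_1}{1-\alpha}\bigl(m^{1-\alpha}-k^{1-\alpha}\bigr)\Bigr).
\]
Since $\alpha<1$, the final product $\prod_{j=m_0}^{m} B_j$ decays like $\exp(-c_2 m^{1-\alpha})$ and therefore tends to $0$.

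For the sum I would split at $k=\lfloor m/2\rfloor$. On the lower range $k\leq m/2$, the product is bounded by $\exp(-c_3 m^{1-\alpha})$ while $\sum_{k\leq m/2}A_k=O(\log m)$, so this part vanishes exponentially fast. On the upper range $k>m/2$, a Taylor expansion gives $m^{1-\alpha}-k^{1-\alpha}\geq c_4(m-k)/m^\alpha$, so the product is bounded by $\exp(-c_5(m-k)/m^\alpha)$; then
\[
\sum_{k=\lfloor m/2\rfloor}^{m} A_k\!\!\prod_{j=k+1}^{m} B_j \leq \frac{2C_1}{m}\sum_{s=0}^{m/2}\exp\!\Bigl(-\frac{c_5 s}{m^\alpha}\Bigr)=O(m^{\alpha-1}),
\]
which also tends to $0$ because $\alpha<1$. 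Combining the two estimates yields $d(c_m,\unit)\to 0$.

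The main technical obstacle is the careful sum-splitting step: the factor $A_k\sim 1/k$ is only summable logarithmically, so a naive bound $\prod_{j=k+1}^m B_j\leq 1$ fails, and one really needs to exploit the geometric-type decay $\exp(-c(m-k)/m^\alpha)$ near $k=m$ together with the much stronger exponential decay for $k\leq m/2$. Everything else is bookkeeping based on the recurrence of Lemma~\ref{convergencespeed} and the identity $\theta(A^{(p)})=\theta(A)^p$.
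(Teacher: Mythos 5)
Your proposal is correct, and the technical core of your argument differs from the paper's. You both start identically: the second inequality of Lemma~\ref{convergencespeed}, together with $\theta(A^{(p)})=\theta(A)^p$, $p_m-p_{m-1}\le a/m$, and $1-\kappa^2(A^{(p_{m-1})})=\frac{4m^{-\alpha}}{(1+m^{-\alpha})^2}$ with $\alpha=\tfrac{a}{2}\log\theta(A)\in(0,1)$, yield the recurrence $d(c_m,\unit)\le \frac{C_1}{m}+\bigl(1-\frac{c_1}{m^\alpha}\bigr)d(c_{m-1},\unit)$. From there the paper does \emph{not} unroll the recurrence; it introduces the majorizing sequence $\beta_m=f_{m-1}(\beta_{m-1})$ with $f_{m-1}(x)=\frac{2a\log M}{m}+\kappa^2(A^{(p_{m-1})})x$, computes the fixed point $l_m=\frac{a\log M}{2}\frac{(1+m^{-\alpha})^2}{m^{1-\alpha}}\downarrow 0$, and then controls $\delta_k=\beta_{k+1}-l_k$ using only the divergence of $\sum_r r^{-\alpha}$ (so that $\prod_r\kappa^2(A^{(p_r)})=0$) to conclude $\limsup_k\delta_k\le l_m$ for every $m$. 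Your route instead writes the solution explicitly as $\sum_k A_k\prod_{j>k}B_j$ plus a transient, bounds the partial products by $\exp\bigl(-\tfrac{c_1}{1-\alpha}(m^{1-\alpha}-k^{1-\alpha})\bigr)$, and splits the sum at $k=m/2$; you are right that the naive bound $\prod B_j\le 1$ fails here (since $\sum 1/k$ diverges) and that the geometric-type decay $\exp(-c(m-k)/m^\alpha)$ near $k=m$ is the essential point. The payoff of your version is an explicit rate $d(c_m,\unit)=O(m^{\alpha-1})$, which matches the order of the paper's fixed points $l_m$; the payoff of the paper's version is that it avoids the integral comparisons and the two-range splitting entirely. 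One cosmetic remark: your parenthetical description of $\theta$ as a ratio of ``products of $p$ entries over $p$ entries'' overloads the symbol $p$ (each ratio involves two entries over two entries), but the identity $\theta(A^{(p)})=\theta(A)^p$ you use is correct.
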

\begin{proof}
Since 
\begin{eqnarray*}
d(c_m,\unit)
&=& a\log\frac{m+1}{m}\log M +a\log\frac{m+1}{m}\kappa(A^{(p_{m-1})})\log M\\
&&+\kappa^2(A^{(p_{m-1})})d(c_{m-1},\unit)\\
&<& \frac{2a\log M}m + \kappa^2(A^{(p_{m-1})})d(c_{m-1},\unit) \enspace.
\end{eqnarray*} 
Let $\beta_1:=d(c_1,\unit)$, and define the sequence $\beta_m$
by $\beta_m:=f_{m-1}(\beta_{m-1})$, 
where
\[
f_{m-1}(x)=\frac{2a\log M}{m}+\kappa^2(A^{(p_{m-1})})x \enspace .
\]
Since every function $f_m$ is nondecreasing, an induction
shows that $d(c_m,\unit)\leq \beta_m$, for all $m \geq 1$, and so,
it suffices to show that $\lim_m \beta_m =0$.

Let $l_m$ be the fixed point of $f_{m-1}$.
Setting
$
\alpha:=
{\frac{a\log\theta(A)}{2}}
$
and observing that
\[
1-\kappa^2(A^{(p_{m-1})})= \frac{4m^{-\alpha}}{(1+m^{-\alpha})^2}\enspace,
\]
we get
\begin{equation*}
l_m
=\frac{2a\log M}{m(1-\kappa^2(A^{(p_{m-1})}))}\\ 
= \frac{a\log M}{2} \frac{\big(1+m ^{-\alpha}\big)^2}{m^{1-\alpha} }
\enspace .
\end{equation*}
Since $0<\alpha<1$, one readily checks that the sequence $l_m$ decreases
with $m$ and converges to zero.
If $\beta_{m+1}\leq l_{m}$ for every $m$, then $\lim_{m\rightarrow\infty}\beta_m
\leq \lim_{m\rightarrow \infty}l_m=0$, and the result is established.
Assume now that $\beta_{m+1}>l_m$ for some $m$. Define
$\delta_{k}:=\beta_{k+1}-l_k$ for all $k\geq m$. Observe
that
\begin{align*}
\delta_{k+1}
&=f_{k}(\beta_k)-f_k(l_k)= \kappa^2(A^{(p_k)})(\beta_k-l_k)\\
&= \kappa^2(A^{(p_k)})\delta_k + \kappa^2(A^{(p_k)})(l_{k-1}-l_k)
\enspace .
\end{align*}
Using the fact that $\kappa^2(A^{(p_r)})\leq 1$ holds for all $r$,
an immediate induction yields
\begin{align}
\delta_k \leq\big(\prod_{r=m}^{k-1}\kappa^2(A^{(p_r)})\big)\delta_m+l_{m}-l_k ,
\qquad \forall k\geq m+1 \enspace .
\label{e-deltak}
\end{align}
Since $1-\kappa^2(A^{(p_r)})\sim 4 r^{-\alpha}$, we have
\[
\prod_{r=m}^\infty \kappa(A^{(p_r)}) =0 
\]
Letting $k\to \infty$ in~\eqref{e-deltak}, we get
$\limsup_{k\to\infty}\delta_k \leq l_m$.
Since this holds for all $m$, it follows that
$\limsup_{k\to\infty}\delta_k \leq 0$, and so,

\[
\limsup_{k\to\infty}\beta_{k+1}=\limsup_{k\to\infty}
\delta_k+l_k  \leq 
\limsup_{k\to\infty}\delta_k
+ \lim_{k\to\infty} l_k =0\enspace.
\]
Hence, $\beta_k$ converges to zero.
\end{proof}

The proof of Theorem~\ref{logconvergencepositive} is achieved since 
$\lim_{m\rightarrow\infty}{d(c_m,\unit)}=0$ implies that $\lim_{m\rightarrow\infty}{d(r_m,\unit)}=0$.

\section{Preprocessing for the optimal assignment problem}
\label{sec:fixp}

In this section we introduce a new algorithm which can be used as
a parallel preprocessing algorithm to solve large dense 
optimal assignment problems, in order
to delete the entries not belonging to optimal assignment.
This approach is based on computing $X(p)$, defined in Theorem~\ref{thm:prop-converge1},
as an approximation to $X(\infty)$ for relatively large values of $p$.
Before going further we need to address the numerical instability of 
computing $X(p)$.
 
\subsection{Avoiding numerical instability }
\label{sec:numins}
To compute $X(p)$, one has to start from the input matrix $A^{(p)}$ and compute the
bistochastic matrix by 
applying a numerical algorithm such as Sinkhorn iteration or Newton method.
But the naive computation of $A^{(p)}$ is numerically unstable for 
large values of $p$. In the following we provide two approaches 
to avoid this numerical instability.
The first method is 
a prescaling step which can be followed by any scaling algorithm such as the
Sinkhorn iteration or Newton method.
A limitation of this method  is that the value of $p$ cannot
exceed $\ln l$ where $l$ is the largest number, in the numerical range (for example $700$ in
double-precision floating-point arithmetic).
This is overcome by the second approach, in which
Sinkhorn iteration is implemented with ``log-glasses''
(along the lines of tropical geometry). This allows one to 
arbitrarily increase the value of $p$. However, 
this approach does not naturally carry over
to other (non-Sinkhorn) scaling algorithms.

\subsubsection{Prescaling step for any scaling algorithm}
\label{sec:prescal}
To avoid the numerical instability one can use the prescaling step presented below.
We set $\max(A)=\max_{ij}a_{ij},\min(A)=\min_{a_{ij}>0}a_{ij}$. 
By applying this prescaling, all the nonzero scaled entries will be placed in $[1,e]$ interval
where $e$ is Napier's constant.
In the case when $\max(A)/\min(A)>e$, the prescaling has another interesting property, 
that is, the scaled matrix is invariant by any entrywise power of the input matrix. 
In other words, 
if we apply the prescaling to the matrix $A^{(q)}$, for all $q\geq 1$, the matrix
obtained after the prescaling step turns out to be independent of the choice of $q$.
When $\frac{\max(A)}{\min(A)}<e$ the entries of $A$ have already been located in
the interval $\min(A)[1,e]$, then we do not need to perform the previous prescaling since
the denominator in the formula defining $m$ will be small if $\max(A)$ is close to $\min(A)$.

\begin{algorithmic}[1]
\newcommand{\algorithmicfunction}{\textbf{function}\ }
\newcommand{\algorithmicfunctionend}{\textbf{end function}\ }
\newcommand{\algorithmicassume}{\textbf{assume: }}
\newcommand{\algorithmicoutput}{\textbf{output: }}
\newcommand{\algorithmiccall}{\textbf{call }}
\IF {$\frac{\max(A)}{\min(A)}>e$}  
	\STATE $m\gets \frac{1}{\log(\max(A)/\min(A))}$ 
	\STATE $c\gets e^{\frac{\log(\min(A))}{\log(\max(A)/\min(A))}}$
	\STATE $A\gets \frac{1}{c}A^{(m)}$	
\ELSE	
	\STATE $A\gets \frac{1}{\min(A)}A$
\ENDIF
\end{algorithmic}

\subsubsection{Logarithmic $p$-Sinkhorn iteration}
\label{subsec:logpsinkh}
The prescaling which has been proposed in the previous section 
has a theoretical disadvantage i.e. the increase of $p$ is 
limited to $\ln l$ where $l$ is the largest number in the numerical range.
We next give a log-coordinate implementation of Sinkhorn iteration
which avoids this limitation. 

Consider the Sinkhorn iteration which was 
defined by Equations~\ref{eq:dad1},~\ref{eq:dad2} and~\ref{eq:dad3}
and let $\bar{u}_m=p^{-1} \log{u_m}$ and $\bar{v}_m=p^{-1} \log{v_m}$, 
be the logarithmic values of the vectors $u_m,v_m$.
The logarithmic form of this iteration can be written as:
\begin{eqnarray*}
&&{\bar{u}{}_{m+1}}_i=-\frac{1}{p}\log{\sum_{j}{\exp{p(\log a_{ij}+{\bar{v}{}_m}_j)}}}\\
&&{\bar{v}{}_{m+1}}_i=-\frac{1}{p}\log{\sum_{j}{\exp{p(\log a_{ji}+{\bar{u}{}_{m+1}}_j)}}}
\end{eqnarray*}
Let 
\begin{eqnarray*}
&&\hat{x}_{ij}=\log{a_{ij}}+{\bar{v}_m}{}_j-\max_j{(\log a_{ij}+{\bar{v}_m}{}_j)}\\
&&\hat{y}_{ji}=\log{a_{ji}}+{\bar{u}_{m+1}}{}_j-\max_j{(\log a_{ji}+{\bar{u}_{m+1}}{}_j)}
\end{eqnarray*}
for which $\hat{x}_{ij},\hat{y}_{ji}\leq0$. 
The logarithmic iteration can be reformulated by using $\hat{x}_{ij}$ and $\hat{y}_{ji}$
as the following:
\begin{eqnarray*}
&&{\bar{u}{}_{m+1}}_i=-\max_j{(\log a_{ij}+{\bar{v}{}_m}_j)}-\frac{1}{p}\log{\sum_{j}{\exp{p\hat{x}_{ij}}}}\\
&&{\bar{v}{}_{m+1}}_i=-\max_j{(\log a_{ji}+{\bar{u}{}_{m+1}}_j)}-\frac{1}{p}\log{\sum_{j}{\exp{p\hat{y}_{ji}}}}
\end{eqnarray*}
The last iteration can be computed for a sufficiently large $p$, without having numerical difficulties. 
We note that a related trick was used by \cite{MR1865511} in a different context.

\subsection{Preprocessing algorithm}
\label{sec:preprocalg}

We shall use the term $\epsilon-$bistochastic matrix, meaning that some distance between $X$
and a bistochastic matrix is less than $\epsilon$.
We measure this distance, 
for a column (row) stochastic matrix, that is a matrix for which the sum of all columns (rows) are one, 
by $\max_i{|r_i-1|}$ where $r_i$ indicates the $i$th row (column) sum.

For a fixed $p>0$, the solution for the entropy maximization problem displayed in Equation~(\ref{eq:pentropyopt}) can be computed
by any scaling algorithm such as Sinkhorn iteration or Newton method. 
Using Corollary~\ref{exponential_conv}, 
it can be seen that if the original matrix has only one optimal permutation, the order of magnitude of all 
the entries which belong to the optimal permutation will be $1\pm O(\exp(-cp))$ 
while the order of magnitude of all other entries will be $O(\exp(-cp))$. 
As an example, consider the following $5$ by $5$ random matrix with the bold entries belonging to optimal permutation.
\[A=
\begin{pmatrix}
0.292 & 0.502 & \textbf{0.918} & 0.281 & 0.686  \\ 
0.566 & \textbf{0.437} & 0.044 & 0.128 & 0.153  \\ 
0.483 & 0.269 & 0.482 & \textbf{0.778} & 0.697  \\ 
0.332 & 0.633 & 0.264 & 0.212 & \textbf{0.842}  \\ 
\textbf{0.594} & 0.405 & 0.415 & 0.112 & 0.406  
\end{pmatrix}\]
By applying Sinkhorn iteration on $A^{(50)}$ the following matrix can be computed. 
\[\bfX(50)=\begin{pmatrix}
3.4E-27 				& 1.5E-08 						&\textbf{1.0E+00} & 7.4E-26 			& 4.7E-06  \\ 
4.8E-02 					& \textbf{9.4E-01} & 4.6E-56 					& 4.0E-32 			& 7.9E-28  \\ 
2.5E-13 					& 4.6E-19 				& 9.3E-12 					& \textbf{1.0E+00} & 1.0E-02  \\ 
1.5E-23 					& 1.2E-02				 & 6.2E-27 						& 4.3E-31 				& \textbf{9.8E-01}  \\ 
\textbf{9.5E-01} & 4.1E-02 					& 6.2E-07 					& 1.0E-34 				& 2.3E-06   
\end{pmatrix}\]
Thus, for sufficiently large values of $p$, when $\bfX(p)$ is an
$\epsilon-$bistochastic matrix,
one may delete all the small entries which are less than a threshold $t$, 
chosen consistent with $\epsilon$, while keeping all others.
In this way the size of the original problem in terms of memory requirements will be reduced to
a much smaller one.

Determining a priori the coarsest accuracy $\epsilon$ and the maximal
threshold $t$, which are required to find an optimal permutation
would need to determine the maximal entropy solution $X(\infty)$ characterized
in Theorem~\ref{thm:prop-converge1}. This appears
to be in general a difficult problem.  We choose a different
route, which is to choose a priori $\epsilon$ and $t$ by
a simple heuristic rule,
and then to verify a posteriori that the deletions of small
entries did not alter the value of the optimal assignment
up to a required precision,
thanks to the approximate optimality certificate
described in Proposition~\ref{prop:aoc}. If this is not
the case, then, $\epsilon$ or $t$ must be decreased.
We fix the initial accuracy and threshold by considering the ``worst'' case in which the matrix $X(\infty)$ is uniform,
with all entries equal to $1/n$ (and $n!$ optimal permutations),
leading 
to the conservative choice $\epsilon=t=1/n$.

\begin{proposition}[Approximate optimality certificate]
\label{prop:aoc}
For an input matrix $A$, and a scalar $p$, let $D_r,D_c$ be the diagonal matrices such that 
$X(p)=D_rAD_c$.
Also, let $\sigma_{opt}$ denote an optimal permutation and ${d_r}_i,{d_c}_j$ denote respectively
the $i$th and the $j$th diagonal elements of $D_r,D_c$.
Then,
\begin{equation}
\log(\omega_{\sigma_{opt}}(A))
\leq 
\frac{1}{p}\left(\sum_{i=1}^n \max_{j}\log {x}_{ij}(p)
-\sum_{i=1}^n \log{d_r}_i-\sum_{j=1}^n \log{d_c}_j\right) \enspace . 
\label{e-certificate}
\end{equation}

\end{proposition}
\begin{proof}
Note that for any permutation $\sigma$ and $1\leq i\leq n$ we have 
$p\log a_{i\sigma(i)}=\log {x}_{i\sigma(i)}(p)-\log{d_r}_i-\log{d_c}_{\sigma(i)}$
which yields,
\[
\sum_{i=1}^n p\log a_{i\sigma(i)}=\sum_{i=1}^n\log {x}_{i\sigma(i)}(p)-\sum_{i=1}^n\log{d_r}_i-\sum_{i=1}^n\log{d_c}_{\sigma(i)}\enspace.
\]
Observe that $\sum_{i=1}^n\log{d_c}_{\sigma(i)}=\sum_{j=1}^n\log{d_c}_{j}$
and 
\[
\sum_{i=1}^n\log {x}_{i\sigma(i)}(p)\leq \sum_{i=1}^n\max_j\log {x}_{ij}(p)\enspace,
\]
so we get
\[
\log(\omega_{\sigma}(A))=\sum_{i=1}^n \log a_{i\sigma(i)}\leq
\frac{1}{p}\left(\sum_{i=1}^n\max_j\log {x}_{ij}(p)-\sum_{i=1}^n\log{d_r}_i-\sum_{j=1}^n\log{d_c}_{j}\right)
\]
The latter inequality holds for any permutation which completes the proof.
\end{proof}

For a matrix $A$ and any choice of $p$, and for a threshold $t$,
let us define the matrix $B$ as follows
\begin{equation}
{B}_{ij}=
\begin{cases}
A_{ij} & \mathrm{if} \quad  x(p)_{ij}\geq t,\\
 0 & \mathrm{otherwise};
\end{cases}
\label{eq:matrixred}
\end{equation}
which denotes the reduced matrix after deleting 
some entries of $A$. 
Let $\omega_{\opt}(B)$ denote the value of the optimal assignment of $B$.
Define the ratio $\gamma$ as follows,
\begin{equation}
 \label{eq:optratio}
\gamma=\frac{\exp \frac{1}{p}\left(\sum_{i=1}^n \max_{j}\log {x}_{ij}(p)
-\sum_{i=1}^n \log{d_r}_i-\sum_{j=1}^n \log{d_c}_j\right)}{\omega_{\sigma_{opt}}(B)}\enspace. 
\end{equation}
Note that when the matrix $A$ has only one optimal permutation and when $p$ tends to infinity, 
by Theorem~\ref{thm:prop-converge1}, 
the Inequality~\ref{e-certificate}
will become an equality and 
\[\sigma_{opt}(B)=\sigma_{opt}(A)=
\lim_{p\rightarrow\infty}\frac{1}{p}\left(\sum_{i=1}^n \max_{j}\log {x}_{ij}(p)
-\sum_{i=1}^n \log{d_r}_i-\sum_{j=1}^n \log{d_c}_j\right)\enspace,\]
which yields that $\gamma=1$. 
By comparison with $A$, the matrix $B$ has been sparsified, so that 
it may fit in the memory of a sequential machine in situations in which
$A$ does not. 
In particular, $\sigma_{opt}(B)$ can be computed by a sequential
algorithm. Also, the numerator in the expression of $\gamma$ can be 
readily evaluated. Hence, the approximate optimality certificate
can be used a posteriori to check that the value of an optimal permutation
of $B$ which has been found is close to the value of the optimal
assignment problem for $A$.

\begin{algorithm}                      
\caption{Preprocessing for optimal assignment problem}
\label{alg:alg1}
\begin{algorithmic}
\STATE input: $A,opt\_ratio$	\textbf{comment: }opt\_ratio: is the required optimal ratio.
\STATE Default: $\epsilon,t\gets 1/n,p_0\gets 100$  
\STATE $p\gets p_0$ 
\STATE $\gamma\gets opt\_ratio$ 
\STATE $n\gets size(A,1)$
\WHILE{$\gamma<opt\_ratio$} 
  \STATE \textbf{comment: }Prescaling
  \IF {$\frac{\max(A)}{\min(A)}>e$}  
	  \STATE $m\gets \frac{1}{\log(\max(A)/\min(A))}$, $c\gets e^{\frac{\log(\min(A))}{\log(\max(A)/\min(A))}}$
	  \STATE $A\gets \frac{1}{c}A^{(m)}$	
  \ELSE	
	  \STATE $A\gets \frac{1}{\min(A)}A$
  \ENDIF 
  \STATE \textbf{comment: }Main section
  \STATE apply any diagonal scaling algorithm to $A^{(p)}$ and compute the diagonal matrices
$D_r,D_c$ and $\epsilon-$bistochastic matrix $X$.  
  \STATE Compute the matrix $B$ defined in~\eqref{eq:matrixred}.
  \STATE Compute the value of $\gamma$ defined in~\eqref{eq:optratio}.
  \STATE increase $p$; \textbf{comment: } increase $p$ and 
repeat until the required optimality ratio is achieved.
\ENDWHILE
\STATE return any optimal permutation of the matrix $B$.
\end{algorithmic}
\end{algorithm}

The above arguments lead to Algorithm~\ref{alg:alg1}. 
The inputs of the algorithm are a square matrix and the required ratio of optimality
which we denote by $opt\_ratio$ and which should be greater than $1$. 
We incorporate the prescaling step proposed in Section~\ref{sec:prescal} 
in order to be able to use any scaling algorithm. This algorithm is
justified by Theorem~\ref{thm:prop-converge1}, which implies that
if $p$ is large enough, and if $\epsilon$ and $t$ are sufficiently small,
deleting the small entries will not affect the entries belonging to optimal
permutations.

This is illustrated in Figure~\ref{fig:poptf} 
for a random matrix and 
``lotkin'' matrix from the gallery of Matlab of size $100$.
\begin{figure}[t!]
\begin{minipage}{0.45\linewidth}
\centering
\includegraphics[scale=0.5]{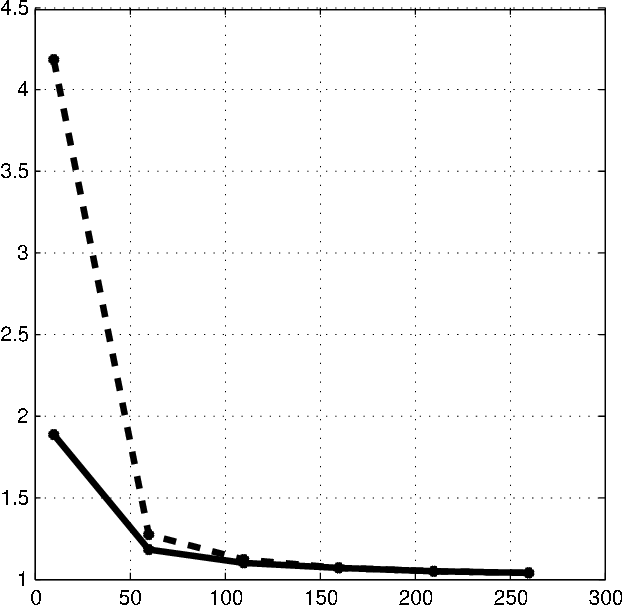} 
\caption{The horizontal axis shows the ratio $\gamma$. The vertical axis shows the values of $p$. 
The solid line gives $\gamma$ as a function of $p$ for a random matrix of size $100$. The dashed line 
gives the same function for the ``lotkin'' matrix from the 
gallery of Matlab of size $100$. We used Sinkhorn iteration as the scaling algorithm. 
} 
\label{fig:poptf}
\end{minipage} 
 \hfill
\begin{minipage}{0.49\linewidth}
\centering
\includegraphics[scale=0.5]{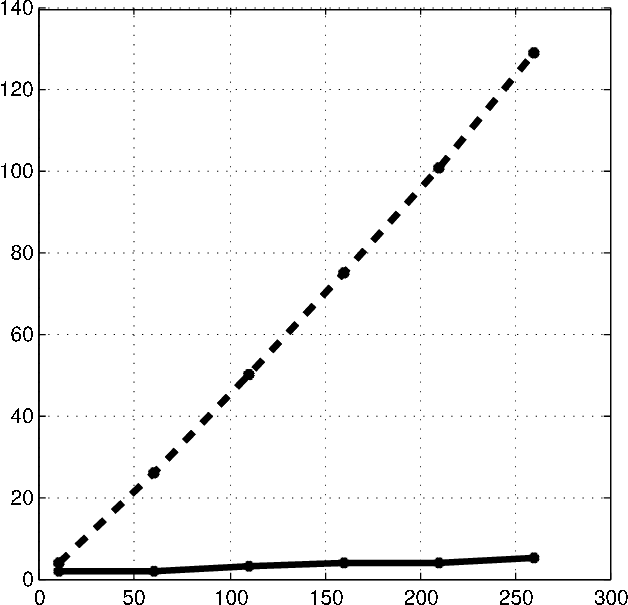} 
\caption{
The horizontal axis shows the number of iterations 
to a $1/100$-bistochastic matrix by using Sinkhorn iteration.
The vertical axis shows the values of $p$. 
The solid line gives the 
number of iterations for a random matrix of size $100$ and
as a function of $p$. The dashed line 
shows the same function for the ``lotkin'' matrix from the 
gallery of Matlab of size $100$. 
} 
\label{fig:iteration}
\end{minipage} 
\end{figure}
Increasing $p$ leads to the true value of the assignment problem
($\gamma$ tends to one), but the convergence of the Sinkhorn algorithm
becomes slower as $p$ increases.
This fact is illustrated in Figure~\ref{fig:iteration}.
Here the required number of Sinkhorn iterations is demonstrated 
for several values of $p$ for a random matrix and ``lotkin'' matrix, both of 
size $100$. 
We set $p_0=100$ by default,
together with
$\epsilon=t=1/n$ as pointed
out above. We increased $p$ by $50$ at each step when $\gamma$ is larger than a given optimal ratio.

\section{Experimental results}
\label{sec:experminets}
In this section we provide experiments that show the efficiency of Algorithm~\ref{alg:alg1}.
We use Sinkhorn iteration and Newton iteration 
as a scaling algorithm in the course of Algorithm~\ref{alg:alg1}. 
In our experiments we set the required optimal ratio ($opt\_ratio$ in Algorithm~\ref{alg:alg1})
around $2$. We present the exact value of $\gamma$ and 
the value of $p$ corresponding to each input matrix in our experimental results.
Note that to compute $\gamma$ we need to compute the optimal assignment after deleting the small entries. 
We used two Matlab implementations of Munkres assignment algorithm
downloadable from ``mathworks'' 
website\footnote{\url{http://www.mathworks.com/matlabcentral/fileexchange/6543-functions-for-the-rectangular-assignment-problem/content/assignmentoptimal.m}
\\\url{http://www.mathworks.com/matlabcentral/fileexchange/20328-munkres-assignment-algorithm/content/munkres.m}}.

In our experiments we generated several dense matrices from the 
gallery of test matrices of Matlab. Most of these matrices are full. 
For these experiments we set the dimension to $1000$. Since some of these matrices have
negative entries, before running our algorithm we take the absolute values 
of the matrix entries ($A=abs(A)$).
The experiments are also presented for 
a random matrix, referred to as ``rand'' (the random function of Matlab) and 
an Euclidean random matrix referred to as ``Euclidean'' in the tables below. 
The latter example has been considered in the context of statistical physics~\cite{Parisi2002}.
We draw at random $2n$ points $x_1,\ldots,x_n;y_1,\ldots,y_n$ uniformly
in the unit cube of $\R^3$.
Then, we consider the matrix $A$ where $a_{ij}=\exp(-d(x_i,y_j))$ and $d$ is the Euclidean distance.
In this way, the permutation $\sigma$ which maximizes $\prod_{i=1}^n{a_{ij}}$ is the same permutation 
which minimizes the distance between these two sets of points.

The columns of Tables~\ref{tbl:dense},~\ref{tbl:newtondensesymmetric} 
and~\ref{tbl:newtondensenonsymmetric} from left to right are: gallery name, 
the value of $\gamma$, 
the value of $p$, 
number of iterations and the percentage of remaining entries over 
the number of nonzero entries after applying the preprocessing algorithm.
We used Matlab version 7.12.0.

\subsection{Sinkhorn iteration}
\label{subsec:sinkhexperiments}
The experiments presented in Table~\ref{tbl:dense} 
are obtained by using Sinkhorn iteration as a diagonal scaling method in Algorithm~\ref{alg:alg1}.
For most of the cases the value of $\gamma$ is already less than $2$
when $p=100$
which means that, at worst, we loose the optimality by a factor of $2$,
whereas for some matrices, we do get the optimal permutation ($\gamma=1$).
Note that for ``pei'', ``randcorr'' and ``toeppd'' matrices,
the only nonzero entries are those that belong to the optimal permutation.

For more than $50\%$
of the cases the original problem is reduced to a new problem which has less 
than $4\%$ of the original entries and in $81\%$
it is reduced to a new problem with less than $27\%$ of the input entries.
The algorithm generally converges quickly to the solution. 
More precisely in $37\%$ of the cases, the algorithm converges in at most $2$ iterations 
and in $62\%$  of the cases, the algorithm converges in at most $132$ iterations. 
Since, Sinkhorn iteration
can be implemented in parallel, this method can be efficiently applied to large dense 
optimal assignment problems as a parallel preprocessing to reduce the size of the original problem.

\begin{table}[t!]
\tblcaption{Sinkhorn iteration for dense matrices from the 
gallery of test matrices of Matlab and for random and random Euclidean distance matrices}
{%
\begin{tabular}{@{}ccccc@{}}
\tblhead{Gallery &  $\gamma$ & $p$ & No.it. &  Rem.(\%)}
cauchy & $1.490$ & $100$ & $70$ & $46.17$ \\
minij & $1.000$ & $100$ & $568$ & $24.05$ \\ 
moler & $1.028$ & $100$ & $281$ & $26.24$ \\ 
pei & $1.000$ & $100$ & $1$ & $0.10$ \\ 
prolate & $1.000$ & $100$ & $16$ & $1.69$ \\ 
randcorr & $1.000$ & $100$ & $1$ & $0.10$ \\ 
toeppd & $1.000$ & $100$ & $1$ & $0.10$ \\ 
chebvand & $1.745$ & $150$ & $2$ & $31.78$ \\ 
circul & $1.000$ & $100$ & $1$ & $17.20$ \\ 
cycol & $1.900$ & $450$ & $93$ & $1.36$ \\ 
rand & $1.839$ & $100$ & $2$ & $25.87$ \\ 
euclidean & $1.728$ & $200$ & $1416$ & $0.92$ \\ 
chebspec & $1.004$ & $100$ & $343$ & $3.54$ \\ 
lehmer & $1.000$ & $100$ & $858$ & $16.46$ \\ 
gcdmat & $1.000$ & $100$ & $2405$ & $0.20$ \\ 
lotkin & $1.817$ & $200$ & $132$ & $40.72$
\lastline
\end{tabular}
}%
\label{tbl:dense}
\end{table}

\subsection{Newton iteration}
\label{subsec:newtonit}
For the sake of comparison, we implemented the preprocessing algorithm by
calling a Newton algorithm at each step.
Solving the diagonal matrix scaling problem by using Newton iteration has been considered first
in the work of~\cite{kachiyan92} for positive semidefinite symmetric matrices.
They have considered the more general problem of finding a positive zero of the mapping
\[
f(x)=b+Ax-x^{-1}
\]  
where $A$ is a given matrix of dimension $n$ and $b$ is a fixed $n-$dimensional vector. They proposed a path-following 
Newton algorithm of complexity $O(\sqrt{n}L)$ where $L$ is the binary length of the input.

Recently, Knight and Ruiz have considered a Newton algorithm for non-negative matrices~\cite{knight2007}.
For a symmetric matrix $A$, they considered the diagonal matrix scaling problem as finding a vector $x$ such that 
\[
f(x)=D(x)Ax-\unit=0
\]
where $D(x)=\diag(x)$. If $A$ is nonsymmetric, then the following matrix will be considered 
as the input of the algorithm.
\[
S=\begin{pmatrix}
0 & A \\
A^T & 0
\end{pmatrix}
\] 
They showed that Newton iteration can be written as 
\[
A_kx_{k+1}=Ax_k+D(x_k)^{-1}\unit
\]
where $A_k=A+D(x_k)^{-1}D(Ax_k)$. Thus in each iteration a linear system of equations 
should be solved for which they used the Conjugate Gradient method. 
In the nonsymmetric case, the latter linear system is singular, however
it is proved that the system is consistent whenever $A$ has support.
Our experiments which will be presented later show that, 
the method works quickly for dense nonsymmetric matrices. 
More details and the exact
implementation of this method can be found in~\cite{knight2007}.

Here, we used the latter method to find the scaling matrices in Algorithm~\ref{alg:alg1}. 
In Tables~\ref{tbl:newtondensesymmetric} and~\ref{tbl:newtondensenonsymmetric}, 
No.it. denotes the total number of 
operations, each of them takes $O(n^2)$ time to be done. 
This includes all the iterations of Conjugate 
Gradient method for each Newton step.
Tables~\ref{tbl:newtondensesymmetric} and~\ref{tbl:newtondensenonsymmetric} 
show the results for dense symmetric and nonsymmetric matrices.
For both cases the algorithm converges rapidly in small number of iterations.
The percentage of the remaining entries is reasonably less than the original problem. 
For ``pei'', ``randcorr'' and ``toeppd'' matrices,
the only nonzero entries are those that belong to the optimal permutation.
Also, in more than $42\%$ of the cases, the original problem 
is reduced to a much smaller problem which has less than $4\%$ of the original entries
and in $68\%$ of the cases the problem is reduced to a problem with less than $27\%$ of the original entries.

\begin{table}[t!]
\tblcaption{Newton iteration for dense symmetric matrices}
{%
\begin{tabular}{@{}ccccc@{}}
\tblhead{Gallery &  $\gamma$ & $p$ & No.it. &  Rem.(\%)}
cauchy &  $1.4907$ & $100$ & $155$ &  $46.17$\\
gcdmat & $1.000$ & $100$ & $151$ & $0.20$ \\ 
lehmer & $1.000$ & $100$ & $162$ & $16.46$ \\ 
minij & $1.000$ & $100$ & $162$ & $24.05$ \\ 
moler & $1.028$ & $100$ & $161$ & $26.24$ \\ 
orthog & $1.000$ & $100$ & $161$ & $48.01$ \\ 
pei & $1.000$ & $100$ & $151$ & $0.10$ \\ 
prolate & $1.000$ & $100$ & $155$ & $1.69$ \\ 
randcorr & $1.000$ & $100$ & $151$ & $0.10$ \\ 
toeppd & $1.000$ & $100$ & $151$ & $0.10$ \\ 
fiedler & $1.711$ & $100$ &  $170$ & $33.77$
\lastline
\end{tabular}
}
\label{tbl:newtondensesymmetric}
\end{table}

\begin{table}[t!]
\tblcaption{Newton iteration for dense nonsymmetric matrices}
{%
\begin{tabular}{@{}ccccc@{}}
\tblhead{Gallery &  $\gamma$ & $p$ & No.it. &  Rem.(\%)}
chebspec & $1.001$ & $100$ & $214$ & $3.54$ \\ 
chebvand & $1.744$ & $150$ & $233$ & $31.78$ \\ 
circul & $1.000$ & $100$ & $157$ & $17.20$ \\ 
forsythe & $1.000$ & $100$ & $262$ & $50.00$ \\ 
rand & $1.839$ & $100$ & $163$ & $25.87$ \\ 
euclidean & $1.728$ & $200$ & $742$ & $0.92$ \\ 
cycol & $2.615$ & $350$ & $551$ & $ 1.71$ \\ 
lotkin & $1.817$ & $200$ & $495$ & $40.72$\\
\lastline
\end{tabular}
}
\label{tbl:newtondensenonsymmetric}
\end{table}

\section{Conclusion}
We studied the connection between the optimal assignment problem and the entropy maximization problem, by means of a parametric deformation of the latter.
We proved that, as the deformation parameter goes to infinity,
the matrix maximizing the entropy
converges to a matrix whose nonzero entries are those which belong to optimal assignments. 
This allowed us to develop an iterative method that we
refer to as \textit{deformed-Sinkhorn iteration}.
We proved that the iteration converges to the solution of optimal assignment problem, if 
the input matrix is positive and if it has only one optimal permutation. 
For positive matrices with more than one optimal permutation, the iteration converges to a matrix 
for which all the nonzero entries belong to at least one optimal permutation.

We also proposed an algorithm which
can be used as a preprocessing in the solution of large scale dense optimal assignment problems to reduce the
size of the input problem in terms of memory requirements.
Experimental results have been generated for two variants of the algorithm. 

The first variant, which is based on Sinkhorn iteration,
shows a generally reasonable convergence for dense matrices,
with a reduction of up to $99\%$ of the input size.
This variant can be efficiently used as a parallel preprocessing step to reduce the size of the 
input problem in very large dense optimal assignment problems.
Another variant of the algorithm, implemented by using Newton iteration,
shows generally a faster convergence for the tested matrices.

\section*{Acknowledgement}
 The authors thank Jean-Charles Gilbert for his comments on an early version of this manuscript.

\def\cprime{$'$}

\end{document}